\newtheorem{remark}{Remark}[section]
\newtheorem{lemma}[remark]{Lemma}
\newtheorem{theorem}[remark]{Theorem}
\newtheorem{corollary}[remark]{Corollary}
\newcommand{\n}{\operatorname{n}}
\newcommand{\m}{\operatorname{m}}
\newcommand{\ecc}{\operatorname{ecc}}
\newcommand{\epn}{\operatorname{epn}}
\title{On the $2$-packing differential of a graph}
\author{A. Cabrera Mart\'inez$^{(1)}$, M.L. Puertas$^{(2)}$, J. A. Rodr\'{\i}guez-Vel\'{a}zquez$^{(1)}$\\
\\
$^{(1)}${\small Universitat Rovira i Virgili,}
{\small Departament d'Enginyeria Inform\`atica i Matem\`atiques } \\  {\small Av. Pa\"{\i}sos
Catalans 26, 43007 Tarragona, Spain.} \\{\small
  abel.cabrera\@@urv.cat, juanalberto.rodriguez\@@urv.cat}\\
$^{(2)}${\small Universidad de Almer\'ia, Departamento de Matem\'aticas}\\
 {\small
Carretera Sacramento s/n, 04120, Almer\'ia, Spain.} \\ {\small
 mpuertas\@@ual.es}
}
\date{ }
\begin{document}
\maketitle

\begin{abstract}
Let $G$ be a graph of order $\n(G)$ and vertex set $V(G)$. Given a set  $S\subseteq V(G)$, we define the external neighbourhood  of $S$  as the set $N_e(S)$ of all vertices in $V(G)\setminus S$ having at least one neighbour in $S$.
 The differential of $S$ is defined to be $\partial(S)=|N_e(S)|-|S|$. In this paper, we introduce the study of the $2$-packing differential of a graph, which we define  as
$\partial_{2p}(G)=\max\{\partial(S): S\subseteq V(G) \text{ is a }2\text{-packing}\}.$
We show that the $2$-packing differential is closely related to several graph parameters, including the packing number, the independent domination number, the total domination number, the perfect differential, and the unique response Roman domination number. In particular, we show that the theory of $2$-packing differentials is an appropriate framework to  investigate the unique response  Roman domination number of a graph without the use of functions.
 Among other results, we obtain a Gallai-type theorem, which states  that $\partial_{2p}(G)+\mu_{_R}(G)=\n(G)$, where  $\mu_{_R}(G)$ denotes the unique response Roman domination number of $G$. As a consequence of the study, we derive several combinatorial results on $\mu_{_R}(G)$, and we show that the problem of finding this parameter is NP-hard.  In addition, the particular case of lexicographic product graphs is discussed.
\end{abstract}

{\it Keywords}:
Unique response Roman domination, $2$-packing differential, differential of a graph

\section{Introduction}

Given a graph $G=(V(G),E(G))$, the \emph{open neighbourhood} of a vertex $v$ of a graph $G$ is defined to be $N(v)=\{u \in V(G):\; u \text{ is adjacent to } v\}$.
The \emph{open neighbourhood of a set}  $S\subseteq V(G)$ is defined as
$N(S)=\cup_{v\in S}N(v)$, while the \emph{external neighbourhood} of $S$, or boundary of $S$,  is defined as $N_e(S)=N(S)\setminus S$.
The \emph{closed neighbourhood} of $v$ is $N[v]=N(v)\cup\{v\}$ and the \emph{closed neighbourhood} of a set $S\subseteq V(G)$ is $N[S]=N(S)\cup S$. We denote by $\deg(v)=|N(v)|$ the degree of vertex $v$, as well as $\delta(G)=\min_{v \in V(G)}\{\deg(v)\}$ the minimum degree of $G$, $\Delta(G)=\max_{v \in V(G)}\{\deg(v)\}$ the maximum degree of $G$,  $\n(G)=|V(G)|$ the order of $G$ and $\m(G)=|E(G)|$ the size of $G$.

The \emph{differential of a set} $S\subseteq V(G)$ is defined as $\partial(S)=|N_e(S)|-|S|$, while the \emph{differential of a graph} $G$ is defined to be
$$\partial(G)=\max \{\partial(S):\, S\subseteq V(G)\}.$$
As described in \cite{MR2212796}, the definition of $\partial(G)$ was given by Hedetniemi about
twenty-five years ago in an unpublished paper, and was also considered by Goddard
and Henning \cite{MR1605086}.
 After that, the differential of a graph has been studied by several authors, including \cite{Sergio-differential-2014,PerfectDifferential,MR2212796,MR2724896,MR3444564}

Lewis et al.\ \cite{MR2212796} motivated the definition of differential from the
following game, what we call \emph{graph differential game}. ``\emph{You are allowed to buy as many tokens as you like, say $k$ tokens, at a cost of one dollar each. You then place the tokens on some
subset $D$ of $k$ vertices of a graph $G$. For each vertex of $G$ which has no token on
it, but is adjacent to a vertex with a token on it, you receive
one dollar. Your
objective is to maximize your profit, that is, the total value received
minus the cost of the tokens bought}". Obviously, $\partial(D)=|N_e(D)|-|D|$ is the profit obtained with the placement  $D$, while the maximum profit equals $\partial(G)$.

In order to introduce a variant of this game, we need the following terminology. A  set $S\subseteq V(G)$   is a $2$-\emph{packing} if $N[u]\cap N[v]= \varnothing$ for every pair of different vertices $u,v\in S$. We define
 $$\wp(G)=\{S\subseteq V(G): S \text{ is a }2\text{-packing  of } G \}.$$
The $2$-\emph{packing number} of $G$, denoted by  $\rho(G)$, is defined to be
$$\rho(G)=\max \{|S|: S\in \wp(G)\}.$$

Now, we consider a version of the graph differential game in which we impose two additional conditions: (1) every vertex which has no token on it is adjacent to at most one vertex with a token on it; and (2) no vertex with a token on it is adjacent to a vertex with a token on it. In this case, any placement $D$ of tokens is a $2$-packing and so this version of the game can be called $2$-\emph{packing graph differential game}, as the maximum profit equals the $2$-\emph{packing differential} of $G$, which is  defined  as
$$\partial_{2p}(G)=\max \{\partial(S): S\in \wp(G)\}.$$

We define a  $\partial_{2p}(G)$-set as a set $S\in \wp(G)$
with $\partial(S)=\partial_{2p}(G)$. The same agreement will be assumed for optimal parameters associated to other characteristic sets defined in the paper.

Let $G$ be the graph shown in Figure \ref{Figure-2-packingGame}.  In the graph differential game, if we place the tokens on vertices $a$, $b$ and $c$, then
we obtain the maximum profit $\partial(G)=7$. In contrast, in the
$2$-packing graph differential game, the maximum profit $\partial_{2p}(G)=6$ is given by the placement of token on vertices  $a$ and $b$.

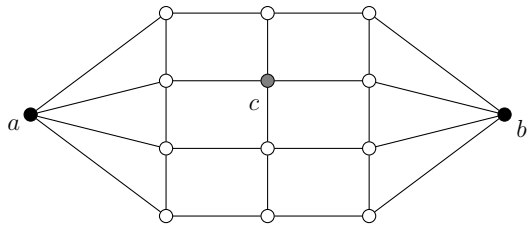
\begin{figure}[ht]
\centering
\begin{tikzpicture}[scale=.45, transform shape]
\node [draw, shape=circle, fill=black] (aa1) at  (-4,3) {};
\node [below] at (-4.5,3) {\LARGE $a$};
\node [draw, shape=circle, fill=black] (cc1) at  (10,3) {};
\node [below] at (10.5,3) {\LARGE $b$};

\node [draw, shape=circle] (a1) at  (0,0) {};
\node [draw, shape=circle] (a2) at  (0,2) {};
\node [draw, shape=circle] (a3) at  (0,4) {};
\node [draw, shape=circle] (a4) at  (0,6) {};

\node [draw, shape=circle] (b1) at  (3,0) {};
\node [draw, shape=circle] (b2) at  (3,2) {};
\node [draw, shape=circle, fill=gray] (b3) at  (3,4) {};
\node [below] at (2.6,3.6) {\LARGE $c$};
\node [draw, shape=circle] (b4) at  (3,6) {};

\node [draw, shape=circle] (c1) at  (6,0) {};
\node [draw, shape=circle] (c2) at  (6,2) {};
\node [draw, shape=circle] (c3) at  (6,4) {};
\node [draw, shape=circle] (c4) at  (6,6) {};

\draw (aa1)--(a1)--(b1)--(c1)--(cc1)--(c2)--(b2)--(a2)--(aa1);
\draw (aa1)--(a3)--(b3)--(c3)--(cc1)--(c4)--(b4)--(a4)--(aa1);
\draw(a1)--(a2)--(a3)--(a4);
\draw(b1)--(b2)--(b3)--(b4);
\draw(c1)--(c2)--(c3)--(c4);
\end{tikzpicture}
\caption{A graph $G$ with $\partial(G)=7$ and $\partial_{2p}(G)=6$.}\label{Figure-2-packingGame}
\end{figure}

In this paper we show that the $2$-packing differential is closely related to several graph parameters, including the packing number, the independent domination number, the total domination number, the perfect differential, and the unique response Roman domination number. In particular, we show that the theory of $2$-packing differentials is an appropriate framework to  investigate the unique response  Roman domination number of a graph without the use of functions.

The rest of the paper is organized as follows. Section~\ref{SectionGeneral Bonds} is devoted to provide some general bounds on the $2$-packing differential, in terms of different parameters such as the maximum and minimum degrees of the graph, the number of vertices, the number of edges, among others. We show that the bounds are tight and, in some cases, we characterize the graphs achieving them. As a consequence of the study, we show that the problem of finding $\partial_{2p}(G)$ is NP-hard. In Section~\ref{SectionNordhaus-Gaddum} we obtain a Nordhaus-Gaddum type theorem for both the addition and the product of the $2$-packing differential of a graph and its complement. Section~\ref{SectionPerfectDiff} is devoted to the study of the relationship between the $2$-packing differential and the perfect differential of a graph. In Section~\ref{SectionGallai} we prove a Gallai-type theorem which states  that $\partial_{2p}(G)+\mu_{_R}(G)=\n(G)$, where  $\mu_{_R}(G)$ denotes the unique response Roman domination number of $G$.
We derive several consequences of this result, including the fact that the problem of finding $\mu_{_R}(G)$ is NP-hard. We finally show the case of the lexicographic product graph in Section~\ref{Sectionlexicographic}, where we obtain general lower and upper bounds of the $2$-packing differential and the unique response Roman domination number. We also compute the exact value of both parameters for the lexicographic product of a path and any other graph.

\section{Basic results and computational complexity}\label{SectionGeneral Bonds}

In this section we present lower and upper bounds of the $2$-packing differential of a graph. In some cases, we also characterize the graphs achieving such bounds. Moreover, these results will provide the computational complexity of the computation of the $2$-packing differential.

We begin with the following theorem that will be a key result in the rest of the paper. The bounds presented here are in terms of the maximum and minimum degrees of the graph and the $2$-packing number.

\begin{theorem}\label{th-packings}
If $G$ is a graph  with no isolated vertex, then the following statements hold.
\begin{enumerate}[{\rm (i)}]
\item $\partial_{2p}(G)=\displaystyle\max_{S\in \wp(G)}\left\{\sum_{v\in S}(deg(v)-1)\right\}.$
\item If $G$ is a $k$-regular graph, then
$\partial_{2p}(G)=(k-1)\rho(G).$
\item $ \rho(G)(\delta(G)-1)\leq \partial_{2p}(G)\leq \rho(G)(\Delta(G)-1).$

\item $\partial_{2p}(G)=\rho(G)(\Delta(G)-1)$ if and only if there exists a $\rho(G)$-set $S$ such that $\deg(v)=\Delta(G)$ for every $v\in S$.

\item If $\partial_{2p}(G)=\rho(G)(\delta(G)-1)$, then every vertex $v$ in every $\rho(G)$-set has degree $\deg(v)=\delta(G)$.
\end{enumerate}
\end{theorem}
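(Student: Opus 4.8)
The whole theorem rests on part (i), so the plan is to prove that identity first and obtain (ii)--(v) as essentially elementary consequences. The crux of (i) is to exploit the defining property of a $2$-packing: for $S\in\wp(G)$ the closed neighbourhoods $\{N[v]:v\in S\}$ are pairwise disjoint. From this I would extract two facts. First, distinct vertices of $S$ are non-adjacent, since $u\in N(v)$ would give $u\in N[u]\cap N[v]$; hence $N(S)\cap S=\varnothing$ and therefore $N_e(S)=N(S)$. Second, the open neighbourhoods $\{N(v):v\in S\}$ are pairwise disjoint. Combining these, $N_e(S)=\bigsqcup_{v\in S}N(v)$ is a disjoint union, so $|N_e(S)|=\sum_{v\in S}\deg(v)$ and consequently $\partial(S)=|N_e(S)|-|S|=\sum_{v\in S}(\deg(v)-1)$. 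Taking the maximum over $\wp(G)$ yields (i).

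Parts (ii) and (iii) then follow by termwise comparison in the formula of (i). For (ii), in a $k$-regular graph every summand equals $k-1\ge 0$ (the hypothesis of no isolated vertex gives $\delta(G)\ge 1$), so $\sum_{v\in S}(\deg(v)-1)=(k-1)|S|$, which is maximised by choosing $|S|=\rho(G)$. For (iii), since $\delta(G)\le\deg(v)\le\Delta(G)$ for every $v$, each summand satisfies $\delta(G)-1\le\deg(v)-1\le\Delta(G)-1$; the upper bound comes from $\sum_{v\in S}(\deg(v)-1)\le |S|(\Delta(G)-1)\le\rho(G)(\Delta(G)-1)$, and the lower bound from evaluating the identity of (i) on a fixed $\rho(G)$-set.

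Parts (iv) and (v) are equality analyses of these inequality chains. For the sufficiency in (iv), inserting a $\rho(G)$-set all of whose vertices have degree $\Delta(G)$ into (i) gives $\partial_{2p}(G)\ge\rho(G)(\Delta(G)-1)$, matching the upper bound of (iii). For the necessity, I would take a $\partial_{2p}(G)$-set $S^{*}$ and observe that the chain $\sum_{v\in S^{*}}(\deg(v)-1)\le |S^{*}|(\Delta(G)-1)\le\rho(G)(\Delta(G)-1)$ must collapse to equalities, forcing both $|S^{*}|=\rho(G)$ and $\deg(v)=\Delta(G)$ for all $v\in S^{*}$. For (v), any $\rho(G)$-set $S$ is a $2$-packing, so $\partial(S)\le\partial_{2p}(G)=\rho(G)(\delta(G)-1)$, while (i) together with $\deg(v)\ge\delta(G)$ gives $\partial(S)\ge\rho(G)(\delta(G)-1)$; equality then forces $\deg(v)=\delta(G)$ for every $v\in S$.

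The single piece of genuine content is the disjoint-union identity in (i); after that, everything is bookkeeping. The points requiring care are the degenerate degree regimes. In the necessity direction of (iv) the step that concludes $|S^{*}|=\rho(G)$ needs $\Delta(G)-1>0$, so I would treat $\Delta(G)=1$ separately: there $\delta(G)\ge 1$ forces $G$ to be a disjoint union of edges, both sides of the claimed identity vanish, and any $\rho(G)$-set trivially has all degrees equal to $\Delta(G)=1$. I would also flag why (v) is only a one-way implication, unlike (iv): a $\partial_{2p}(G)$-set need not be a $\rho(G)$-set, so a smaller $2$-packing consisting of high-degree vertices may realise $\partial_{2p}(G)$ even when every maximum $2$-packing is made of minimum-degree vertices, which blocks the converse.
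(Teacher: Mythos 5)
Your proposal is correct and takes essentially the same route as the paper: both establish the identity $\partial(S)=\sum_{v\in S}(\deg(v)-1)$ for $2$-packings $S$ (via disjointness of the closed neighbourhoods) and then obtain (ii)--(v) by termwise comparison and collapsing inequality chains, including the same contradiction/equality analysis for (iv) and (v). Your explicit handling of the degenerate case $\Delta(G)=1$ in the necessity direction of (iv) is a small refinement the paper's proof glosses over (its chain does not force $|D|=\rho(G)$ there), but it does not change the substance of the argument.
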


\begin{proof}
Notice that for any  $S\in \wp(G)$, we have that  $ \displaystyle \partial(S)=|N(S)|-|S|=\sum_{v\in S}(deg(v)-1)$. Hence,
$
\partial_{2p}(G) =\displaystyle \max_{S\in \wp(G)}\left\{\partial (S)\right\}
 =\max_{S\in \wp(G)}\left\{|N(S)|-|S|\right\}
 =\max_{S\in \wp(G)} \{\sum_{v\in S}(deg(v)-1)\}.
$
Therefore, (i) follows. From (i) we derive (ii) and (iii).

We proceed to prove (iv). If there exists a $\rho(G)$-set $S$ such that $\deg(v)=\Delta(G)$ for every $v\in S$, then  $\rho(G)(\Delta(G)-1)=\partial(S)\le \partial_{2p}(G)$. Hence, (iii) leads to $\partial_{2p}(G)=\rho(G)(\Delta(G)-1)$. Conversely, if
$\partial_{2p}(G)=\rho(G)(\Delta(G)-1)$, then for any $\partial_{2p}(G)$-set $D$ we have that $\rho(G)(\Delta(G)-1)=\partial(D)=\displaystyle\sum_{v\in D}(deg(v)-1)\le \displaystyle\sum_{v\in D}(\Delta(G)-1)\le |D|(\Delta(G)-1)\le \rho(G)(\Delta(G)-1)$, which implies that $D$ is a $\rho(G)$-set and $\deg(v)=\Delta(G)$ for every $v\in D$.

Finally, we prove (v). Suppose on the contrary that there exists a $\rho(G)$-set $S'$ and a vertex $v\in S'$ such that $\deg(v)> \delta(G)$. Then, $\partial_{2p}(G)=\displaystyle\max_{S\in \wp(G)}\left\{\sum_{v\in S}(deg(v)-1)\right\}\geq \sum_{v\in S'}(deg(v)-1)>\rho (G)(\delta(G)-1)$. Figure \ref{Figura-CotaInfPacking-GradMinim} shows an example of a graph $G$ with $\partial_{2p}(G)= \rho(G)(\delta(G)-1)$, the set of black-coloured vertices is the unique $\rho(G)$-set.
\end{proof}

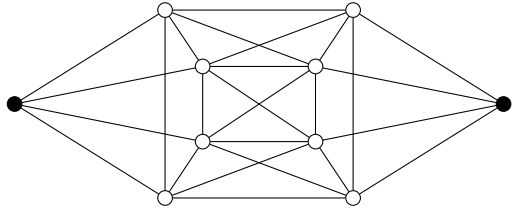
\begin{figure}[ht]
\centering
\begin{tikzpicture}[scale=.5, transform shape]
\node [draw, shape=circle, fill=black] (aa1) at  (-4,3) {};
%\node [below] at (-4.5,3) {\LARGE $a$};
\node [draw, shape=circle, fill=black] (bb1) at  (9,3) {};
%\node [below] at (10.5,3) {\LARGE $b$};

\node [draw, shape=circle] (a1) at  (0,0.5) {};
\node [draw, shape=circle] (a2) at  (1,2) {};
\node [draw, shape=circle] (a3) at  (1,4) {};
\node [draw, shape=circle] (a4) at  (0,5.5) {};

\node [draw, shape=circle] (b1) at  (5,0.5) {};
\node [draw, shape=circle] (b2) at  (4,2) {};
\node [draw, shape=circle] (b3) at  (4,4) {};
\node [draw, shape=circle] (b4) at  (5,5.5) {};

\draw (aa1)--(a1)--(b1)--(bb1)--(b2)--(a2)--(aa1);
\draw (aa1)--(a3)--(b3)--(bb1)--(b4)--(a4)--(aa1);
\draw(a1)--(a2)--(a3)--(a4)--(a1);
\draw(b1)--(b2)--(b3)--(b4)--(b1);
\draw(a1)--(b2)--(a3)--(b4);
\draw(b1)--(a2)--(b3)--(a4);
\end{tikzpicture}
\caption{A graph $G$ with $\partial_{2p}(G)= \rho(G)(\delta(G)-1)$.}\label{Figura-CotaInfPacking-GradMinim}
\end{figure}

The first consequence of the preceding result is the determination of the complexity of the computation of the $2$-packing differential. Given a graph $G$ and a positive integer $t$, the \textit{$2$-packing problem} is to decide
whether there exists a $2$-packing $S$ in $G$ such that $|S|$ is at least
$t$. It is well known that the $2$-packing  problem is NP-complete. Hence, the optimization problem of finding $\rho(G)$ is NP-hard. Furthermore, it was shown in \cite{MR1267280} that the $2$-packing remains NP-complete for regular bipartite graphs. Therefore, from
Theorem \ref{th-packings} (ii) we derive the following result.

\begin{corollary}\label{2-packing differential}
The problem of finding the $2$-packing differential of a graph is NP-hard, even for regular bipartite  graphs.
\end{corollary}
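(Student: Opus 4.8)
The plan is to reduce the computation of the $2$-packing number $\rho(G)$ on regular bipartite graphs---an NP-hard optimization problem by \cite{MR1267280}---to the computation of $\partial_{2p}(G)$ on the same class. The decisive tool is Theorem \ref{th-packings}~(ii): for a $k$-regular graph $G$ we have $\partial_{2p}(G)=(k-1)\rho(G)$, a linear relation between the two quantities that can be inverted whenever $k\ge 2$.

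Concretely, suppose for contradiction that $\partial_{2p}(G)$ could be computed in polynomial time for every regular bipartite graph. Given such a graph $G$, its common degree $k$ is obtained in constant time by inspecting the degree of a single vertex. Excluding the trivial case $k=1$ (where $G$ is a disjoint union of edges and $\rho(G)$ is read off immediately), we have $k\ge 2$, and the identity of Theorem \ref{th-packings}~(ii) yields $\rho(G)=\partial_{2p}(G)/(k-1)$. Hence $\rho(G)$ would be computable in polynomial time, contradicting the NP-hardness of the $2$-packing problem on regular bipartite graphs. Therefore computing $\partial_{2p}(G)$ is NP-hard even on this restricted class. Equivalently, one may phrase this as a many-one reduction sending an instance $(G,t)$ of the $2$-packing decision problem to the instance $(G,(k-1)t)$ of the decision problem ``is $\partial_{2p}(G)\ge s$?'', since $\rho(G)\ge t\iff\partial_{2p}(G)\ge(k-1)t$.

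Because the graph itself is never altered, the output instance stays regular bipartite, so the restriction to this class is preserved throughout. I do not expect any genuine obstacle here: the entire argument is a one-line consequence of the regularity formula. The only subtlety worth flagging is the need for $k\ge 2$ so that the factor $k-1$ is invertible; since the source hardness result of \cite{MR1267280} already concerns regular bipartite graphs of degree at least two, this restriction costs nothing.
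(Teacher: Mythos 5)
Your proof is correct and follows essentially the same route as the paper: both rely on the NP-completeness of the $2$-packing problem on regular bipartite graphs from \cite{MR1267280} together with the identity $\partial_{2p}(G)=(k-1)\rho(G)$ of Theorem \ref{th-packings}~(ii). Your write-up merely makes explicit the inversion step (requiring $k\ge 2$) and the many-one reduction that the paper leaves implicit.
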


As a second consequence of Theorem~\ref{th-packings} we can derive the exact value of the $2$-packing differential in particular graph families. To this end, we need the following definitions.

A set $S\subseteq V(G)$ of vertices is a \emph{dominating set}  if
$N(v)\cap S\ne \varnothing$ for every vertex   $v\in V(G)\setminus S$.
Let $\mathcal{D}(G)$ be the set of dominating sets of $G$.
The \emph{domination number} of $G$ is defined to be,
$$\gamma(G)=\min\{|S|:\, S\in \mathcal{D}(G)\}.$$

Observe that  $\gamma(G)\ge \rho(G)$ and  $\mathcal{D}(G) \cap \wp(G)\ne \varnothing$
if and only if there exists a $\gamma(G)$-set which is a $\rho(G)$-set. A graph $G$ with $\mathcal{D}(G) \cap \wp(G)\ne \varnothing$ is called an  \emph{efficient closed domination graph} and in such case $\gamma(G)= \rho(G)$ . Furthermore,  Meir and Moon \cite{MR0401519} showed that $\gamma(T)= \rho(T)$ for every tree $T$.  Even so, there are trees in which $\mathcal{D}(T) \cap \wp(T)= \varnothing$.

If $G$ is a $k$-regular nonempty graph and there exists  $S\in \mathcal{D}(G)\cap \wp(G)$, then $\n(G)=|S|+|N(S)|=|S|+|S|k=\rho(G)(1+k).$
Therefore, the following result is a direct consequence of Theorem \ref{th-packings} (ii).
\begin{corollary}
Let $k$ be a positive integer. If $G$ is a $k$-regular efficient closed domination graph, then
$$ \partial_{2p}(G)=\frac{(k-1)\n(G)}{k+1}.$$
\end{corollary}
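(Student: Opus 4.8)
The plan is to combine the counting identity displayed just before the statement with part (ii) of Theorem~\ref{th-packings}. First I would unpack the hypothesis: saying that $G$ is a $k$-regular efficient closed domination graph means precisely that $G$ is $k$-regular and that $\mathcal{D}(G)\cap\wp(G)\neq\varnothing$, so I may fix a set $S$ that is simultaneously a dominating set and a $2$-packing. Since $k$ is a positive integer, every vertex of $G$ has degree $k\geq 1$, so $G$ has no isolated vertex and the hypotheses of Theorem~\ref{th-packings} are satisfied.

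Next I would establish that $\rho(G)=\n(G)/(k+1)$. Because $S$ is a dominating set, $N[S]=V(G)$, hence $V(G)=S\cup N(S)$; and because $S$ is a $2$-packing, the closed neighbourhoods of its vertices are pairwise disjoint, so $N(S)$ decomposes as a disjoint union of the $k$ neighbours of each vertex of $S$, giving $|N(S)|=k|S|$. This yields $\n(G)=|S|+|N(S)|=|S|(1+k)$. Moreover, as noted in the text, a graph admitting a set in $\mathcal{D}(G)\cap\wp(G)$ satisfies $\gamma(G)=\rho(G)$, and such an $S$ is then both a $\gamma(G)$-set and a $\rho(G)$-set, so $|S|=\rho(G)$. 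Therefore $\rho(G)=\n(G)/(k+1)$.

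Finally, applying Theorem~\ref{th-packings}(ii), which gives $\partial_{2p}(G)=(k-1)\rho(G)$ for $k$-regular graphs, and substituting the value of $\rho(G)$ just obtained, I conclude $\partial_{2p}(G)=(k-1)\n(G)/(k+1)$, as desired. There is essentially no obstacle here: the entire content is packed into the identity $\n(G)=\rho(G)(1+k)$, whose only point requiring care is the disjointness of the neighbourhoods of vertices in $S$—guaranteed exactly by the $2$-packing condition—after which the conclusion drops out of Theorem~\ref{th-packings}(ii) by direct substitution.
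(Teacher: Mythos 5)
Your proof is correct and takes essentially the same route as the paper: the paper also derives $\n(G)=|S|+|N(S)|=|S|+k|S|=\rho(G)(1+k)$ for a set $S\in\mathcal{D}(G)\cap\wp(G)$ and then substitutes into Theorem~\ref{th-packings}~(ii). Your only addition is to spell out why $|S|=\rho(G)$ (via $\rho(G)\le\gamma(G)\le|S|\le\rho(G)$), a point the paper leaves implicit in its remark that efficient closed domination graphs satisfy $\gamma(G)=\rho(G)$.
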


The next statement is a direct consequence of Theorem \ref{th-packings} (i).

\begin{corollary}
For any graph $G$ of diameter at most two, $\partial_{2p}(G)=\Delta(G)-1$.
\end{corollary}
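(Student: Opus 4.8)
The plan is to reduce the corollary to Theorem~\ref{th-packings}(i) by first pinning down the structure of $\wp(G)$ when the diameter is small. First I would observe that a graph of diameter at most two is connected, and since $G$ has no isolated vertex it has order $\n(G)\ge 2$; in particular every vertex has degree at least one, so $\Delta(G)\ge 1$ and hence $\Delta(G)-1\ge 0$.

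The crux is to show that $\wp(G)$ contains only the empty set and singletons, i.e.\ $\rho(G)=1$. For distinct vertices $u,v$, the pair $\{u,v\}$ is a $2$-packing precisely when $N[u]\cap N[v]=\varnothing$, and I would note that this forces the distance $d_G(u,v)\ge 3$: if $u$ and $v$ were adjacent then $v\in N[u]\cap N[v]$, and if they shared a common neighbour $w$ then $w\in N[u]\cap N[v]$, so in either case the intersection is nonempty. Since diameter at most two guarantees $d_G(u,v)\le 2$ for every pair of vertices, no $2$-packing can contain two distinct vertices. Thus each $S\in\wp(G)$ satisfies $|S|\le 1$, and as every singleton trivially lies in $\wp(G)$, we get $\rho(G)=1$.

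Finally I would evaluate the right-hand side of Theorem~\ref{th-packings}(i) over this restricted family. The empty set contributes $0$, while a singleton $\{v\}$ contributes $\sum_{u\in\{v\}}(\deg(u)-1)=\deg(v)-1$. Maximizing over all admissible $S$ therefore yields $\max_{v\in V(G)}(\deg(v)-1)=\Delta(G)-1$, which dominates the empty-set value because $\Delta(G)-1\ge 0$. Hence $\partial_{2p}(G)=\Delta(G)-1$, as claimed. There is essentially no real obstacle here: the only point that needs care is the equivalence between the $2$-packing condition on a pair and the distance bound $d_G(u,v)\ge 3$, after which the conclusion is an immediate application of Theorem~\ref{th-packings}(i).
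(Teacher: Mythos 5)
Your proposal is correct and matches the paper's intended argument: the paper states this corollary as a direct consequence of Theorem~\ref{th-packings}(i), and the details it leaves implicit are exactly what you supply, namely that diameter at most two forces every $2$-packing to be empty or a singleton (so $\rho(G)=1$), after which the maximum in (i) is $\max_{v\in V(G)}(\deg(v)-1)=\Delta(G)-1$. Your additional care about the empty set and the sign of $\Delta(G)-1$ is a sound, if routine, completion of that same route.
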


We can also involve the order $n(G)$ and the size $m(G)$ of the graph and we obtain the following upper bound of the $2$-packing differential.

\begin{theorem} \label{Bound size and order}
For any graph $G$ with no isolated vertex,  $$ \partial_{2p}(G)\le \rho(G)(\delta(G)-1)+2\m(G)-\n(G)\delta(G).$$
Furthermore, the equality holds if and only if there exists a $\rho(G)$-set $S$ such that $\deg(v)=\delta(G)$ for every $v\in V(G)\setminus S$.
\end{theorem}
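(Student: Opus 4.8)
The plan is to combine the degree-sum formula of Theorem~\ref{th-packings}(i) with a handshake-type count. I would start from a $\partial_{2p}(G)$-set $D$, so that $\partial_{2p}(G)=\sum_{v\in D}(\deg(v)-1)=\sum_{v\in D}\deg(v)-|D|$, and rewrite $\sum_{v\in D}\deg(v)=2\m(G)-\sum_{v\in V(G)\setminus D}\deg(v)$ via $\sum_{v\in V(G)}\deg(v)=2\m(G)$. The two crude estimates I then apply are $\sum_{v\in V(G)\setminus D}\deg(v)\ge(\n(G)-|D|)\delta(G)$ and, since $G$ has no isolated vertex (so $\delta(G)-1\ge 0$), $|D|(\delta(G)-1)\le\rho(G)(\delta(G)-1)$. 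It is cleanest to record these as a single slack identity valid for \emph{every} $S\in\wp(G)$:
\begin{align*}
& \rho(G)(\delta(G)-1)+2\m(G)-\n(G)\delta(G)-\partial(S)\\
&\quad =\Big(\sum_{v\in V(G)\setminus S}\deg(v)-(\n(G)-|S|)\delta(G)\Big)+(\rho(G)-|S|)(\delta(G)-1).
\end{align*}
Both bracketed terms are nonnegative, so taking $S=D$ gives the asserted upper bound.

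For the characterization, the ($\Leftarrow$) direction is immediate from the identity: if $S$ is a $\rho(G)$-set with $\deg(v)=\delta(G)$ for all $v\in V(G)\setminus S$, then both terms vanish, so $\partial(S)$ already equals the right-hand side of the bound, which together with the upper bound forces equality. For the ($\Rightarrow$) direction I would again take a $\partial_{2p}(G)$-set $D$; equality makes the left-hand side zero, so both slack terms vanish. The first gives $\deg(v)=\delta(G)$ for every $v\in V(G)\setminus D$, and the second gives $(\rho(G)-|D|)(\delta(G)-1)=0$. When $\delta(G)\ge 2$ this yields $|D|=\rho(G)$, so $D$ itself is the required $\rho(G)$-set and we are done.

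The main obstacle is the case $\delta(G)=1$, where the second slack term vanishes automatically and $D$ need not be a $\rho(G)$-set, so I must produce the desired $\rho(G)$-set by hand. Here I would pass to the set $B$ of all vertices of degree at least $2$; since every vertex of $V(G)\setminus D$ is a leaf we have $B\subseteq D$, hence $B$ is a $2$-packing. As distinct vertices of a $2$-packing lie at distance at least $3$, no two vertices of $B$ are adjacent and none share a neighbour, so every neighbour of a vertex of $B$ is a leaf. Consequently each vertex of $B$ with its pendant neighbours forms a star component and the remaining components are single edges $K_2$; thus $G$ is a disjoint union of stars. Then $\rho(G)$ equals the number of components, and taking one centre from each nontrivial star (i.e.\ the vertices of $B$) together with one endpoint of each $K_2$ yields a $\rho(G)$-set all of whose outside vertices have degree $\delta(G)=1$, completing the proof. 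I expect this structural step for $\delta(G)=1$ to be the only genuinely delicate part; everything else is the counting identity above.
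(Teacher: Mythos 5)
Your proof is correct, and the counting part---the slack identity obtained from $\partial(S)=\sum_{v\in S}\deg(v)-|S|$ for $S\in\wp(G)$ plus the handshake lemma---is exactly the computation the paper performs (the paper writes it as a chain of inequalities starting from $2\m(G)$ rather than as a single identity, but the two estimates used are the same). Where you genuinely depart from the paper is in the forward direction of the equality characterization. The paper deduces from tightness of its inequality chain that $|S|=\rho(G)$ and $\deg(v)=\delta(G)$ for every $v\in V(G)\setminus S$; but the step from $|S|(1-\delta(G))=\rho(G)(1-\delta(G))$ to $|S|=\rho(G)$ is valid only when $\delta(G)\ge 2$, and the paper says nothing about the case $\delta(G)=1$, where that slack term vanishes identically and a $\partial_{2p}(G)$-set need not have maximum cardinality among $2$-packings. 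You noticed precisely this and supplied the missing argument: when $\delta(G)=1$ and equality holds, every vertex outside a $\partial_{2p}(G)$-set $D$ is a leaf, so the set $B$ of vertices of degree at least $2$ sits inside $D$ and is itself a $2$-packing, forcing every neighbour of a vertex of $B$ to be a leaf and hence $G$ to be a disjoint union of stars; from that structure a $\rho(G)$-set with all outside vertices of degree $1$ is built directly (a $2$-packing meets each star component in at most one vertex, so $\rho(G)$ is the number of components). This structural step is correct, so your proof is not merely a valid alternative: it is more complete than the paper's own proof, which has a reparable gap at $\delta(G)=1$, and your repair is the natural one.
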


\begin{proof}
If $S\subseteq V(G)$ is a $\partial_{2p}(G)$-set, then
\begin{align*}
2\m(G)&=\sum_{v\in S}\deg(v)+\sum_{v\not \in S}\deg(v)\\
&=\partial_{2p}(G)+|S|+\sum_{v\not \in S}\deg(v)\\
&\ge \partial_{2p}(G)+|S|+(\n(G)-|S|)\delta(G)\\
&=\partial_{2p}(G)+\n(G)\delta(G)+|S|(1-\delta(G))\\
&\ge \partial_{2p}(G)+\n(G)\delta(G)+\rho(G)(1-\delta(G)).
\end{align*}
Therefore, the upper bound follows.

From the previous  procedure  we deduce that if $\partial_{2p}(G)= 2\m(G)-\n(G)\delta(G)+(\delta(G)-1)\rho(G)$, then $|S|=\rho(G)$ and  $\deg(v)=\delta(G)$ for every $v\in V(G)\setminus S$.
Conversely, if $S$ is a $\rho(G)$-set and $\deg(v)=\delta(G)$ for every $v\in V(G)\setminus S$, then the same procedure leads to $2\m(G)-\n(G)\delta(G)+(\delta(G)-1)\rho(G)=\partial(S)\le \partial_{2p}(G)$, and by the upper bound we conclude that the equality holds.
\end{proof}
Notice that  for any $k$-regular graph, $2\m(G)=\n(G)\delta(G)=n(G)k$, which implies that
$\partial_{2p}(G)=(k-1)\rho(G)=2\m(G)-\n(G)\delta(G)+(\delta(G)-1)\rho(G).$

%%%%%%%%%%%%%%%%%%Figura Teorema 3.6 %%%%%%%%%%%%%%%%%%%%%

\begin{figure}[ht]
\centering
\begin{tikzpicture}[scale=.5, transform shape]
\node [draw, shape=circle, fill=black] (aa1) at  (-4,3) {};
%\node [below] at (-4.5,3) {\LARGE $a$};
\node [draw, shape=circle, fill=black] (bb1) at  (7,3) {};
%\node [below] at (10.5,3) {\LARGE $b$};
\node [draw, shape=circle, fill=black] (dd1) at  (18,3) {};

\node [draw, shape=circle] (a1) at  (0,0) {};
\node [draw, shape=circle] (a2) at  (0,2) {};
\node [draw, shape=circle] (a3) at  (0,4) {};
\node [draw, shape=circle] (a4) at  (0,6) {};

\node [draw, shape=circle] (b1) at  (3,0) {};
\node [draw, shape=circle] (b2) at  (3,2) {};
\node [draw, shape=circle] (b3) at  (3,4) {};
\node [draw, shape=circle] (b4) at  (3,6) {};

\draw (aa1)--(a1)--(b1)--(bb1)--(b2)--(a2)--(aa1);
\draw (aa1)--(a3)--(b3)--(bb1)--(b4)--(a4)--(aa1);
\draw(a1)--(a2);
\draw(a3)--(a4);
\draw(b1)--(b2);
\draw(b3)--(b4);

\node [draw, shape=circle] (c1) at  (11,0) {};
\node [draw, shape=circle] (c2) at  (11,2) {};
\node [draw, shape=circle] (c3) at  (11,4) {};
\node [draw, shape=circle] (c4) at  (11,6) {};

\node [draw, shape=circle] (d1) at  (14,0) {};
\node [draw, shape=circle] (d2) at  (14,2) {};
\node [draw, shape=circle] (d3) at  (14,4) {};
\node [draw, shape=circle] (d4) at  (14,6) {};

\draw (bb1)--(c1)--(d1)--(dd1)--(d2)--(c2)--(bb1);
\draw (bb1)--(c3)--(d3)--(dd1)--(d4)--(c4)--(bb1);
\draw(c1)--(c2);
\draw(c3)--(c4);
\draw(d1)--(d2);
\draw(d3)--(d4);
\end{tikzpicture}
\caption{A graph $G$ with $ \partial_{2p}(G)=\rho(G)(\delta(G)-1)+2\m(G)-\n(G)\delta(G).$ The set of black-coloured vertices forms a $\partial_{2p}(G)$-set.}\label{FigureUpperBound-1-Size}
\end{figure}
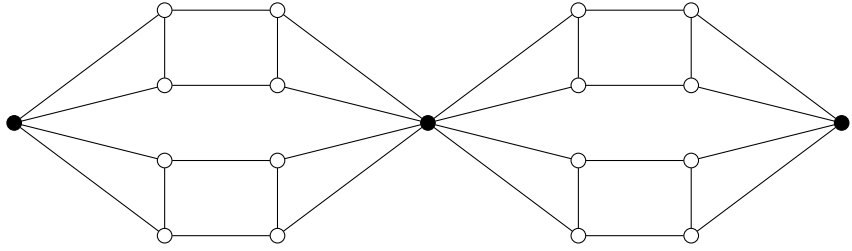

Figure \ref{FigureUpperBound-1-Size} shows a graph $G$ with $ \partial_{2p}(G)=\rho(G)(\delta(G)-1)+2\m(G)-\n(G)\delta(G).$
Observe that the set $S$ of black-coloured vertices forms a $\rho(G)$-set with $\deg(v)=\delta(G)$ for every $v\in V(G)\setminus S$.

The bound presented in Theorem~\ref{Bound size and order} has a special behaviour in efficient closed domination graphs, as we show in the following two results.

\begin{theorem}\label{Lower bound packing=dominating}

If $G$ is an efficient closed domination graph with no isolated vertex, then the following statements hold.
\begin{enumerate}[{\rm (i)}]
\item
$\partial_{2p}(G)\ge \n(G)-2\rho(G).$

\item If there exists a $\rho(G)$-set  $S$ such that $\deg(v)=\delta(G)$ for every $v\in V(G)\setminus S$, then
$$\partial_{2p}(G)= \n(G)-2\rho(G).$$
\end{enumerate}
\end{theorem}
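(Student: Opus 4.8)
The plan is to treat the two parts separately, deriving (i) directly from the structure of efficient closed domination graphs and (ii) from the equality condition of Theorem~\ref{Bound size and order} together with (i).

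For part (i), I would start from the defining property of an efficient closed domination graph: there exists a set $S$ that is simultaneously a $\gamma(G)$-set and a $\rho(G)$-set, hence a dominating $2$-packing with $|S|=\rho(G)$. Since $S\in\wp(G)$, its open neighbourhoods are pairwise disjoint and disjoint from $S$, so $N_e(S)=N(S)$ and $|N(S)|=\sum_{v\in S}\deg(v)$. Since $S$ dominates $G$, we have $N[S]=V(G)$, whence $\n(G)=|S|+|N(S)|$ and therefore $|N(S)|=\n(G)-\rho(G)$. Then $\partial(S)=|N(S)|-|S|=\n(G)-2\rho(G)$, and as $S\in\wp(G)$ the definition of $\partial_{2p}(G)$ gives $\partial_{2p}(G)\ge\partial(S)=\n(G)-2\rho(G)$, settling (i).

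For part (ii), the key observation is that the hypothesis — the existence of a $\rho(G)$-set $S$ with $\deg(v)=\delta(G)$ for every $v\in V(G)\setminus S$ — is precisely the equality condition of Theorem~\ref{Bound size and order}. Hence $\partial_{2p}(G)=\rho(G)(\delta(G)-1)+2\m(G)-\n(G)\delta(G)$. I would then rewrite $2\m(G)$ using this same set $S$: splitting the degree sum over $S$ and its complement gives $2\m(G)=\sum_{v\in S}\deg(v)+(\n(G)-\rho(G))\delta(G)$, and the $2$-packing property yields $\sum_{v\in S}\deg(v)=|N(S)|=|N[S]|-\rho(G)$. Substituting and simplifying, the $\rho(G)\delta(G)$ and $\n(G)\delta(G)$ terms cancel and one is left with $\partial_{2p}(G)=|N[S]|-2\rho(G)$.

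Finally, since $|N[S]|\le\n(G)$ this yields the upper bound $\partial_{2p}(G)\le\n(G)-2\rho(G)$, which together with part (i) forces the equality $\partial_{2p}(G)=\n(G)-2\rho(G)$ (and, as a by-product, $|N[S]|=\n(G)$, so $S$ is in fact dominating). The only real subtlety is recognizing that the degree hypothesis is exactly the equality case of Theorem~\ref{Bound size and order} and then carrying out the cancellation via the identity $\sum_{v\in S}\deg(v)=|N[S]|-\rho(G)$; everything else is routine bookkeeping with the $2$-packing and domination properties.
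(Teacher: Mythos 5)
Your proposal is correct and follows essentially the same route as the paper: part (i) is the identical computation of $\partial(S)$ for a dominating $2$-packing, and part (ii) likewise hinges on the equality case of Theorem~\ref{Bound size and order} combined with the trivial bound $|N[S]|\le \n(G)$ and the lower bound from (i). The only cosmetic difference is that the paper quotes from that proof the fact that $S$ is a $\partial_{2p}(G)$-set and bounds $|N(S)|-\rho(G)$ directly, whereas you invoke the stated equality $\partial_{2p}(G)=\rho(G)(\delta(G)-1)+2\m(G)-\n(G)\delta(G)$ and recover the same identity $\partial_{2p}(G)=|N[S]|-2\rho(G)$ by algebra.
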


\begin{proof}
For any set  $D\in \mathcal{D}(G)\cap \wp(G)$,
$$\partial_{2p}(G)\ge  \partial(D)= |N(D)|-|D|= |V(G)\setminus D|-|D|=\n(G)-2\rho(G).$$
Now, in order to prove (ii), let $S$ be a  $\rho(G)$-set such that $\deg(v)=\delta(G)$ for every $v\in V(G)\setminus S$. In the proof of Theorem~\ref{Bound size and order}  we have shown that $S$ is a $\partial_{2p}(G)$-set. Notice that $|N(S)|\le \n(G)-|S|=\n(G)-\rho(G)$.  Hence,   $\n(G)-2\rho(G)\le \partial_{2p}(G)=\partial(S)=|N(S)|-\rho(G)\le \n(G)-2\rho(G)$. Therefore, $\partial_{2p}(G)= \n(G)-2\rho(G).$
\end{proof}

By Theorems \ref{Bound size and order} and \ref{Lower bound packing=dominating} we deduce the following result.

\begin{theorem}\label{Lower bound packing} For any efficient closed domination graph $G$ with $\delta(G)\ge 1$,
$$\rho(G)\ge \left\lceil \frac{\n(G)(\delta(G)+1)-2\m(G)}{\delta(G)+1}\right\rceil.$$
Furthermore, the equality
%$\rho(G)=  \frac{\n(G)(\delta(G)+1)-2\m(G)}{\delta(G)+1}$
holds if and only if there exists a $\rho(G)$-set  $S$ such that $\deg(v)=\delta(G)$ for every $v\in V(G)\setminus S$.
\end{theorem}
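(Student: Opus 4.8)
The plan is to sandwich the quantity $\partial_{2p}(G)$ between the lower bound of Theorem~\ref{Lower bound packing=dominating}(i) and the upper bound of Theorem~\ref{Bound size and order}, and then eliminate $\partial_{2p}(G)$ to obtain a single inequality involving only $\rho(G)$, $\delta(G)$, $\n(G)$ and $\m(G)$. Since $G$ is an efficient closed domination graph with $\delta(G)\ge 1$, it has no isolated vertex, so both theorems are available and give
\[
\n(G)-2\rho(G)\;\le\;\partial_{2p}(G)\;\le\;\rho(G)(\delta(G)-1)+2\m(G)-\n(G)\delta(G).
\]
Discarding the middle term leaves an inequality with no occurrence of $\partial_{2p}(G)$, which is exactly the relation we must solve for $\rho(G)$.

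The next step is the routine rearrangement: collecting the $\rho(G)$-terms on one side rewrites the displayed inequality as $\n(G)(\delta(G)+1)-2\m(G)\le \rho(G)(\delta(G)+1)$, and dividing by the positive quantity $\delta(G)+1$ isolates $\rho(G)$ as at least the stated fraction. Because $\rho(G)$ is a nonnegative integer, the lower bound may be replaced by its ceiling, which yields the displayed inequality.

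For the characterization, the backward implication is the cleaner half: if there is a $\rho(G)$-set $S$ with $\deg(v)=\delta(G)$ for every $v\in V(G)\setminus S$, then Theorem~\ref{Bound size and order} forces the upper bound to be an equality while Theorem~\ref{Lower bound packing=dominating}(ii) forces the lower bound to be an equality, so the two outer terms of the chain coincide; this collapses the rearranged inequality to the exact identity $\rho(G)(\delta(G)+1)=\n(G)(\delta(G)+1)-2\m(G)$, making the fraction an integer equal to $\rho(G)$ and hence making the ceiling sharp. I expect the forward implication to be the main obstacle, precisely because of the ceiling: a short computation shows that the difference between the upper and lower bounds equals $(\delta(G)+1)\big(\rho(G)-\tfrac{\n(G)(\delta(G)+1)-2\m(G)}{\delta(G)+1}\big)$, so the two bounds coincide exactly when the fraction is integral and equal to $\rho(G)$, which is a genuinely stronger condition than attaining the ceiling when the fraction is not an integer. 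The delicate point is therefore to argue that the equality in the statement corresponds to this exact identity rather than to a mere rounding, at which stage one invokes the equality condition of Theorem~\ref{Bound size and order} to recover the structural conclusion that some $\rho(G)$-set $S$ has all vertices of $V(G)\setminus S$ of degree $\delta(G)$.
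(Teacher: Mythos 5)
Your proof follows the paper's route exactly: sandwich $\partial_{2p}(G)$ between the lower bound of Theorem~\ref{Lower bound packing=dominating}~(i) and the upper bound of Theorem~\ref{Bound size and order}, rearrange and invoke integrality of $\rho(G)$ for the ceiling; prove the backward implication from the equality statements of both theorems; and prove the forward implication by collapsing the inequality chain and applying the equality condition of Theorem~\ref{Bound size and order}. In this sense your argument is complete to exactly the same extent as the paper's own proof.

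The ``delicate point'' you flag in the forward implication is genuine, and the paper does not resolve it either: its forward implication literally begins with the hypothesis $\rho(G)=\frac{\n(G)(\delta(G)+1)-2\m(G)}{\delta(G)+1}$, i.e.\ it silently reads ``equality'' as equality with the un-rounded fraction, which is precisely the reading your argument needs. Under the literal ceiling reading, the ``only if'' direction is in fact false, so no argument could close the gap you describe. Take $G=K_4-e$: it is an efficient closed domination graph (it has a dominating vertex), with $\n(G)=4$, $\m(G)=5$, $\delta(G)=2$, and $\rho(G)=1$ since $G$ has diameter two; the fraction equals $\frac{2}{3}$, so $\rho(G)=1=\left\lceil \frac{2}{3}\right\rceil$ and the displayed equality holds, yet every $\rho(G)$-set is a single vertex and therefore leaves outside it at least one of the two vertices of degree $3>\delta(G)$, so no $\rho(G)$-set satisfies the degree condition. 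Your observation that attaining the ceiling is strictly weaker than attaining the fraction is thus correct; the only way to make the forward implication work is to do what the paper implicitly does and interpret the equality as $\rho(G)$ coinciding with the fraction itself, at which point your collapse-the-chain argument is exactly the paper's proof.
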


\begin{proof}
By combining the bounds given in Theorems  \ref{Bound size and order} and \ref{Lower bound packing=dominating}, we deduce that
$2\m(G)-\n(G)\delta(G)+(\delta(G)-1)\rho(G) \ge \partial_{2p}(G)\ge \n(G)-2\rho(G).$
Therefore, the bound follows.

Now, if $\rho(G)= \frac{\n(G)(\delta(G)+1)-2\m(G)}{\delta(G)+1},$ then we have equalities in the above inequality chain, and  by Theorem  \ref{Bound size and order} we conclude that there exists a $\rho(G)$-set  $S$  such that $\deg(v)=\delta(G)$ for every $v\in V(G)\setminus S$.

Conversely, if there exists a $\rho(G)$-set  $S$  such that $\deg(v)=\delta(G)$ for every $v\in V(G)\setminus S$, then Theorems  \ref{Bound size and order} and  \ref{Lower bound packing=dominating} lead to $2\m(G)-\n(G) \delta(G)+(\delta(G)-1)\rho(G)= \n(G)-2\rho(G),$ which implies that $\rho(G)= \frac{\n(G)(\delta(G)+1)-2\m(G)}{\delta(G)+1}$.
\end{proof}

We now present lower and upper bounds of the $2$-packing differential in terms of some dominating-type parameters. We begin with the following definition.

A set $S\subseteq V(G)$ which is both dominating and independent is called an \emph{independent dominating set}. Moreover, the \emph{independent domination number} $i(G)$ is
$$i(G)=\min\{|S|:\, S \text{ is an independent dominating set }\}.$$

In the following result we present both bounds and we characterize graphs attaining the upper one.

\begin{theorem} \label{ThDominationIndep2}
For any graph $G$ with no isolated vertex, $$\Delta(G)-1\le \partial_{2p}(G) \leq \frac{(\Delta(G)-1)(\n(G)-i(G))}{\Delta(G)}.$$
Furthermore, the following statements are equivalent.
\begin{enumerate}
\item[{\rm (i)}] $ \partial_{2p}(G)=\frac{(\Delta(G)-1)(\n(G)-i(G))}{\Delta(G)}.$
\item[{\rm (ii)}] There exist an $i(G)$-set $D$ and a $2$-packing $S\subseteq D$ such that $N(S)\cup D=V(G)$ and $deg(v)=\Delta(G)$ for every $v\in S$.
\end{enumerate}
\end{theorem}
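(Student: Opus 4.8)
The plan is to treat the two bounds and the characterization separately, letting the upper bound and the equality analysis share a single construction.

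For the lower bound I would simply pick a vertex $v$ with $\deg(v)=\Delta(G)$. A single vertex is always a $2$-packing, so $\{v\}\in\wp(G)$ and $\partial_{2p}(G)\ge\partial(\{v\})=\deg(v)-1=\Delta(G)-1$. For the upper bound, fix a $\partial_{2p}(G)$-set $S$. By Theorem~\ref{th-packings}(i) we have $\partial_{2p}(G)=\sum_{v\in S}(\deg(v)-1)$, and since $S$ is a $2$-packing its members have pairwise disjoint neighbourhoods, whence $|N(S)|=\sum_{v\in S}\deg(v)$. The first key step is the termwise inequality $\Delta(G)(\deg(v)-1)\le(\Delta(G)-1)\deg(v)$, valid because $\deg(v)\le\Delta(G)$, which after summing over $S$ gives $\Delta(G)\,\partial_{2p}(G)\le(\Delta(G)-1)\,|N(S)|$.

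The second, and main, step is to prove $|N(S)|\le\n(G)-i(G)$. For this I would put $W=V(G)\setminus N[S]$, the set of vertices left undominated by $S$, choose a maximal independent set $I$ of the induced subgraph $G[W]$, and check that $D=S\cup I$ is an independent dominating set: it is independent because $S$ and $I$ are independent while $W$ has no neighbour in $S$, and it dominates because $N(S)$ is covered by $S$ and $W\setminus I$ is covered by $I$. Hence $i(G)\le|D|=|S|+|I|\le|S|+|W|=\n(G)-|N(S)|$, and substituting into the chain above and dividing by $\Delta(G)$ yields $\partial_{2p}(G)\le\frac{(\Delta(G)-1)(\n(G)-i(G))}{\Delta(G)}$.

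For the implication (ii)$\Rightarrow$(i), given the $i(G)$-set $D$ and the $2$-packing $S\subseteq D$ with $N(S)\cup D=V(G)$ and $\deg(v)=\Delta(G)$ on $S$, I would first observe that $N(S)\cap D=\varnothing$: a vertex of $N(S)$ is adjacent to some vertex of $S\subseteq D$, which would violate the independence of $D$. Thus $\n(G)=|N(S)|+|D|=|S|\Delta(G)+i(G)$, so $|S|=(\n(G)-i(G))/\Delta(G)$, and $\partial_{2p}(G)\ge\partial(S)=|S|(\Delta(G)-1)=\frac{(\Delta(G)-1)(\n(G)-i(G))}{\Delta(G)}$, which combined with the upper bound forces equality. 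For (i)$\Rightarrow$(ii) I would run the upper-bound chain backwards. Equality forces both $\Delta(G)\,\partial_{2p}(G)=(\Delta(G)-1)|N(S)|$ and (after cancelling the factor $\Delta(G)-1$) $|N(S)|=\n(G)-i(G)$. The first makes every termwise inequality tight, so $\deg(v)=\Delta(G)$ for all $v\in S$; the second forces $|I|=|W|$ for every maximal independent set $I$ of $G[W]$, which happens only when $G[W]$ is edgeless, that is, $W$ is independent. Then $D=S\cup W$ is an independent dominating set of size $|S|+|W|=\n(G)-|N(S)|=i(G)$, hence an $i(G)$-set, and it satisfies $S\subseteq D$, $N(S)\cup D=N[S]\cup W=V(G)$, and $\deg(v)=\Delta(G)$ on $S$, giving (ii). I expect the main obstacle to be precisely the inequality $|N(S)|\le\n(G)-i(G)$ together with its equality case: the extension $D=S\cup I$ and the deduction that tightness forces $W$ to be independent are the crux, while the degree condition and the arithmetic are routine. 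I would dispatch the degenerate case $\Delta(G)=1$ (disjoint unions of edges, where both sides vanish and $S$ may be taken as one vertex per component) on its own, since that is the only place where cancelling the factor $\Delta(G)-1$ in the equality analysis breaks down.
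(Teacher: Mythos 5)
Your proof is correct and follows essentially the same route as the paper's: both arguments extend the $\partial_{2p}(G)$-set $S$ to an independent dominating set by adjoining a maximal independent set of the subgraph induced by $V(G)\setminus N[S]$ (the paper splits this set into its isolated vertices $S'$ and the rest $S''$, which your single maximal independent set $I$ handles in one step), thereby obtaining $|N(S)|\le \n(G)-i(G)$, and then combine this with $|N(S)|\le \Delta(G)|S|$ to get the upper bound and run the chain backwards for the equivalence. Your explicit treatment of the degenerate case $\Delta(G)=1$, where cancelling the factor $\Delta(G)-1$ breaks down, is a small refinement the paper omits.
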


\begin{proof}
The lower bound is obtained by definition of $\partial_{2p}(G)$.  Let $S$ be a $\partial_{2p}(G)$-set, $S'=\{v\in V(G)\setminus N[S]: N(v)\subseteq N(S)\}$ and $S''=V(G)\setminus (N[S]\cup S')$. Let $S_1$ be an independent dominating set of the subgraph induced by $S''$. Notice that this subgraph does not have isolated vertices, which implies  that $|S_1|<|S''|$. Also, observe that  the set $S\cup S'\cup S_1$ is an independent dominating set of $G$. Hence,
$$|N(S)|=\n(G)-(|S|-|S'|-|S''|)\le \n(G)-(|S|-|S'|-|S_1|)\le \n(G)-i(G).$$
Also, notice that $|N(S)|\leq \Delta(G)|S|$. Hence,
\begin{align*}
\Delta(G)\partial_{2p}(G)&= \Delta(G)(|N(S)|-|S|)\\
                & = \Delta(G)|N(S)|-\Delta(G)|S|\\
                &\le \Delta(G)|N(S)|-|N(S)|\\
    &= (\Delta(G)-1)|N(S)|\\
    &\le (\Delta(G)-1)(\n(G)-i(G)).
\end{align*}
Hence,  the upper bound follows.

Now, if (i) holds, then we have equalities in the previous inequality chains. From these, we deduce that $S''=\varnothing$, $D=S\cup S'$ is an $i(G)$-set, where $S$ is a $2$-packing and $|N(S)|=\Delta(G)|S|=n-i(G)$, which implies that $N(S)\cup D=V(G)$ and $deg(v)=\Delta(G)$ for every $v\in S$.  Therefore, (ii) follows.

Finally, if (ii) holds, i.e., there exist an $i(G)$-set $D$ and a $2$-packing $W\subseteq D$ such that $N(W)\cup D=V(G)$ and $deg(v)=\Delta(G)$ for every $v\in W$, then  $\Delta(G)|W|=|N(W)|=\n(G)-i(G)$ and so
$$\partial_{2p}(G)\geq \partial(W)=|N(W)|-|W|=\n(G)-i(G)-\frac{\n(G)-i(G)}{\Delta(G)}.$$
Therefore, from the upper bound we deduce that the equality holds.
\end{proof}

%%%%%%%%%%%%%%%%%%%%%%%%%%

\begin{corollary}
Let $G$ be a graph with no isolated vertex. If $\Delta(G)=\n(G)-i(G)$, then $\partial_{2p}(G)=\Delta(G)-1$.
%, otherwise $\partial_{2p}(G)\le \n(G)-i(G)-2$.
\end{corollary}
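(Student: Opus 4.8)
The plan is to derive this immediately from Theorem~\ref{ThDominationIndep2}, which already supplies the two-sided bound
$$\Delta(G)-1\le \partial_{2p}(G)\le \frac{(\Delta(G)-1)(\n(G)-i(G))}{\Delta(G)}$$
valid for every graph with no isolated vertex. The entire argument reduces to observing that, under the hypothesis $\Delta(G)=\n(G)-i(G)$, the upper bound collapses onto the lower bound.

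First I would substitute $\n(G)-i(G)=\Delta(G)$ into the right-hand side of the displayed chain. Since $G$ has no isolated vertex we have $\Delta(G)\ge 1$, so division by $\Delta(G)$ is legitimate and we obtain
$$\frac{(\Delta(G)-1)(\n(G)-i(G))}{\Delta(G)}=\frac{(\Delta(G)-1)\Delta(G)}{\Delta(G)}=\Delta(G)-1.$$
Hence the upper and lower bounds coincide, and the sandwich $\Delta(G)-1\le \partial_{2p}(G)\le \Delta(G)-1$ forces $\partial_{2p}(G)=\Delta(G)-1$.

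There is essentially no obstacle here: all of the real work is already contained in Theorem~\ref{ThDominationIndep2}, and this corollary is merely the remark that the stated hypothesis makes its upper bound tight. In particular, the numerical collapse of the two bounds alone is enough, so I would not need to invoke the equivalence between condition~(i) and condition~(ii) of that theorem.
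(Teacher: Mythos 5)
Your proof is correct and is exactly the paper's intended derivation: the corollary is stated without proof as an immediate consequence of Theorem~\ref{ThDominationIndep2}, and the argument is precisely the substitution $\n(G)-i(G)=\Delta(G)$ into the upper bound, which collapses it onto the lower bound $\Delta(G)-1$. You are also right that the equivalence (i)$\Leftrightarrow$(ii) of that theorem is not needed here.
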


The converse of the result above does not hold. For instance,  let $G$ be a graph having a vertex $v\in V(G)$ of maximum degree  $deg(v)=\Delta(G)=\n(G)-3$ such that the two vertices in   $V(G)\setminus N[v]$ are adjacent. In this case, $i(G)=2$ and $\partial_{2p}(G)=\n(G)-4=\Delta(G)-1$, while $\Delta(G)<\n(G)-i(G)$.

Next, we consider a particular case of the equivalence given in Theorem \ref{ThDominationIndep2}.

\begin{theorem} \label{ThDominationIndep2-particular-case}
For any graph  $G$  with no isolated vertex, $$ \partial_{2p}(G)\leq \n(G)-i(G)-1.$$
Furthermore, the equality holds if and only if $\Delta(G)=\n(G)-i(G)$.
\end{theorem}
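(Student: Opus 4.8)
The plan is to read this off directly from Theorem~\ref{ThDominationIndep2}, which already supplies the two-sided estimate $\Delta(G)-1\le \partial_{2p}(G)\le \frac{(\Delta(G)-1)(\n(G)-i(G))}{\Delta(G)}$. Everything hinges on one elementary structural inequality, namely $\Delta(G)\le \n(G)-i(G)$. Once this is established, the claimed upper bound follows by a purely algebraic comparison of the two fractions, and the equality analysis reduces to checking which links in the inequality chain are forced to be tight.

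First I would prove $\Delta(G)\le \n(G)-i(G)$. Fix a vertex $v$ with $\deg(v)=\Delta(G)$ and extend $\{v\}$ greedily to a maximal independent set $D$. Since every maximal independent set is dominating, $D$ is an independent dominating set and hence $i(G)\le |D|$. On the other hand, independence forces $D\cap N(v)=\varnothing$, so $D\subseteq \{v\}\cup(V(G)\setminus N[v])$ and therefore $|D|\le 1+(\n(G)-\Delta(G)-1)=\n(G)-\Delta(G)$. Combining the two gives $i(G)\le \n(G)-\Delta(G)$, i.e.\ $\Delta(G)\le \n(G)-i(G)$.

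Next I would insert this into the bound of Theorem~\ref{ThDominationIndep2}. Rewriting that upper bound as $\frac{(\Delta(G)-1)(\n(G)-i(G))}{\Delta(G)}=(\n(G)-i(G))-\frac{\n(G)-i(G)}{\Delta(G)}$ and noting that $\Delta(G)\le \n(G)-i(G)$ is equivalent to $\frac{\n(G)-i(G)}{\Delta(G)}\ge 1$, I obtain $\partial_{2p}(G)\le (\n(G)-i(G))-1$, as required. For the equality I would argue both implications. If $\Delta(G)=\n(G)-i(G)$, then both the lower and the upper bound of Theorem~\ref{ThDominationIndep2} collapse to the common value $\Delta(G)-1=\n(G)-i(G)-1$, forcing $\partial_{2p}(G)=\n(G)-i(G)-1$. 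Conversely, if $\partial_{2p}(G)=\n(G)-i(G)-1$, then the squeeze $\n(G)-i(G)-1=\partial_{2p}(G)\le \frac{(\Delta(G)-1)(\n(G)-i(G))}{\Delta(G)}\le \n(G)-i(G)-1$ forces $\frac{(\Delta(G)-1)(\n(G)-i(G))}{\Delta(G)}=\n(G)-i(G)-1$; clearing the denominator reduces this to $\Delta(G)=\n(G)-i(G)$.

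The main obstacle is really only the structural inequality $\Delta(G)\le \n(G)-i(G)$, and even that is mild once one has the idea of extending a maximum-degree vertex to a maximal independent set; the rest is bookkeeping with the estimate of Theorem~\ref{ThDominationIndep2}. One minor point to keep in view is that the hypothesis of no isolated vertex is what guarantees $\Delta(G)\ge 1$, so that division by $\Delta(G)$ is legitimate and the bounds of Theorem~\ref{ThDominationIndep2} are available to be invoked.
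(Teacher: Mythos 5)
Your proof is correct and follows essentially the same route as the paper: both read the bound and the equality characterization off Theorem~\ref{ThDominationIndep2}, using the inequality $\Delta(G)\le \n(G)-i(G)$ together with the rewriting $\frac{(\Delta(G)-1)(\n(G)-i(G))}{\Delta(G)}=(\n(G)-i(G))-\frac{\n(G)-i(G)}{\Delta(G)}$ and the same squeeze argument for the equality case. The only (harmless) difference is that you prove $\Delta(G)\le \n(G)-i(G)$ directly by extending a maximum-degree vertex to a maximal independent set, whereas the paper deduces it by comparing the two bounds of Theorem~\ref{ThDominationIndep2}; your direct argument even covers the degenerate case $\Delta(G)=1$, where that comparison gives no information.
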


\begin{proof}
From  Theorem  \ref{ThDominationIndep2} we deduce that $\n(G)-i(G)\geq \Delta(G)$ and $\partial_{2p}(G)\leq  \n(G)-i(G)-\frac{\n(G)-i(G)}{\Delta(G)}.$ Therefore, the upper bound follows.

Now, if $\Delta(G)=\n(G)-i(G)$, then from the   bounds given in Theorem \ref{ThDominationIndep2} we deduce that $\partial_{2p}(G)=\n(G)-i(G)-1$.  Conversely, if  $\partial_{2p}(G)=\n(G)-i(G)-1$, then  by Theorem  \ref{ThDominationIndep2} we deduce that $\Delta(G)=\n(G)-i(G)$, which completes the proof.
\end{proof}

We now characterize all trees attaining the lower bound presented  in Theorem~\ref{ThDominationIndep2}. To this end, we will use the following notation. Given a tree $T$, a vertex $v\in V(T)$ with $\deg(v)=\Delta(T)$ and any vertex $v_i\in N(v)$, denote by $T_i(v)$ the sub-tree of $T$ rooted in $v_i$ obtained from $T$ by removing the edge $\{v,v_i\}$. For each $i\in \{1, \dots ,\Delta (T)\}$ such that $T_i(v)$ is non-trivial, denote by $w_i$ the neighbour of $v_i$ in $T_i(v)$ with the largest degree.

\begin{theorem}
Let $T$ be a non-trivial tree. Then, $\partial_{2p}(T)=\Delta(T)-1$ if and only if, for each vertex $v\in V(T)$ with $\deg(v)=\Delta(T)$, the following conditions hold.

\begin{enumerate}
\item[{\rm (i)}] $\ecc(v)\leq 3$,
\item[{\rm (ii)}] $\deg(v_i) +\sum_{j\neq i} (\deg (w_j) -1)\leq \Delta(T)$, ($i=1, \dots, \Delta (T)$ and $j\neq i$ with $T_j(v)$ non-trivial),
\item[{\rm (iii)}] $\sum_{i} (\deg (w_i) -1)\leq \Delta(T)-1$, ($i\in \{1, \dots ,\Delta (T)\}$ such that $T_i(v)$ is non-trivial).
\end{enumerate}
\end{theorem}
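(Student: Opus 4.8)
The plan is to work from the reformulation in Theorem~\ref{th-packings}(i), namely $\partial_{2p}(T)=\max_{S\in\wp(T)}\sum_{u\in S}(\deg(u)-1)$, together with the lower bound $\partial_{2p}(T)\ge\Delta(T)-1$ from Theorem~\ref{ThDominationIndep2}, which is attained by the singleton $2$-packing $\{v\}$ at any maximum-degree vertex $v$. Thus the statement reduces to: $\partial_{2p}(T)=\Delta(T)-1$ exactly when no $2$-packing of $T$ has value exceeding $\Delta(T)-1$. I would prove the equivalence by exhibiting, in the forward direction, three concrete $2$-packings whose values yield precisely the three inequalities, and by showing, in the backward direction, that every $2$-packing is bounded by $\Delta(T)-1$ via a distance-layer analysis around a single maximum-degree vertex.

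For the forward direction, fix a vertex $v$ with $\deg(v)=\Delta(T)$ and neighbours $v_1,\dots,v_{\Delta(T)}$. Each required inequality comes from observing that a specific set is a $2$-packing (so its value is at most $\partial_{2p}(T)=\Delta(T)-1$) and rearranging. For (i), if $\ecc(v)\ge 4$, the vertex $x$ lying at distance $3$ from $v$ on a path to a farthest vertex satisfies $\deg(x)\ge2$, and $\{v,x\}$ is a $2$-packing of value $(\Delta(T)-1)+(\deg(x)-1)\ge\Delta(T)$, contradicting the hypothesis; hence $\ecc(v)\le3$. For (ii), the set $\{v_i\}\cup\{w_j:\ j\ne i,\ T_j(v)\text{ non-trivial}\}$ is a $2$-packing, since $v_i$ and each $w_j$ are at distance $3$ and distinct $w_j,w_k$ are at distance $4$; its value $(\deg(v_i)-1)+\sum_{j\ne i}(\deg(w_j)-1)\le\Delta(T)-1$ is exactly (ii) after rearrangement. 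For (iii), the set $\{w_i:\ T_i(v)\text{ non-trivial}\}$ is a $2$-packing (pairwise distance $4$), and its value $\sum_i(\deg(w_i)-1)\le\Delta(T)-1$ is (iii).

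For the backward direction, I would fix any maximum-degree vertex $v$ and use condition (i) to organise $V(T)$ into the distance layers $0,1,2,3$ around $v$; then each subtree $T_i(v)$ has depth at most $2$ and every vertex at distance $3$ is a leaf. Let $S$ be an arbitrary $2$-packing. If $v\in S$, then no vertex at distance at most $2$ from $v$ may lie in $S$, so the only other members are distance-$3$ leaves, which contribute $0$, giving value exactly $\Delta(T)-1$. If $v\notin S$, I would use three structural facts: at most one neighbour $v_i$ can belong to $S$ (two of them are at distance $2$); within each subtree at most one layer-$2$ vertex can belong to $S$ (children of a common $v_j$ are at distance $2$), and its contribution is at most $\deg(w_j)-1$; and every layer-$3$ vertex is a leaf contributing $0$. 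Consequently the value of $S$ is at most $\sum_j(\deg(w_j)-1)$ when no $v_i\in S$, and at most $(\deg(v_i)-1)+\sum_{j\ne i}(\deg(w_j)-1)$ when some $v_i\in S$; both are at most $\Delta(T)-1$ by (iii) and (ii), respectively. Hence $\partial_{2p}(T)\le\Delta(T)-1$, which forces equality.

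The main obstacle is the bookkeeping in the backward direction: one must verify the distance computations that confine an arbitrary $2$-packing to one of the two canonical shapes, and argue that replacing any chosen layer-$2$ vertex by the maximum-degree child $w_j$ and discarding the value-$0$ leaves never decreases the value, so that conditions (ii) and (iii) genuinely bound all $2$-packings and not merely the canonical ones. It is also worth remarking on the quantifier ``for each maximum-degree vertex'': the forward direction needs the conditions at every such vertex (any one could furnish a $2$-packing of value at least $\Delta(T)$), whereas the backward argument needs them at only one vertex; the two directions together yield the stated equivalence.
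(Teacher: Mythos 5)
Your proposal is correct and follows essentially the same route as the paper's proof: the same three witness $2$-packings ($\{v,u\}$ with $d(v,u)=3$ for condition (i), $\{v_i\}\cup\{w_j: j\ne i\}$ for (ii), and $\{w_i\}$ for (iii)) establish the forward direction, and the same case analysis on $N[v]\cap S$ (namely $v\in S$, some $v_i\in S$, or empty intersection) yields the converse. The only difference is that you spell out the distance-layer bookkeeping that the paper's Cases 2 and 3 leave largely implicit.
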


\begin{proof}
Assume that $\partial_{2p}(T)=\Delta(T)-1$ and take $v\in V(T)$ with $\deg(v)=\Delta(T)$. The distance between two vertices $x,y\in V(T)$ will be denoted by $d(x,y)$.

If there exists $w\in V(T)$ such that $d(v,w)=4$, then for $u\in N(w)$ with $d(v,u)=3$ the set $\{v,u\}$ is a $2$-packing and  $\deg(u)\geq 2$. Thus,   $\partial_{2p}(T)\ge \partial(\{v,u\})\ge (\deg(u)-1)+(\deg(v)-1)>\deg(v)-1=\Delta(T)-1$, which is a contradiction. Therefore, $\ecc(v)\leq 3$.

The vertex set $X=\{v_i\}\cup \{w_j\colon j\neq i, T_j(v) \text{ is non-trivial}\}$ is a $2$-packing, so $\partial(X)=(\deg(v_i)-1) +\sum_{j\neq i} (\deg (w_j) -1)\leq \partial_{2p}(T)=\Delta(T)-1$, that gives (ii).
Analogously, the set $\{w_i\colon T_i(v) \text{ is non-trivial}\}$ is a $2$-packing and this gives (iii).

Now, let $T$ be a non-trivial tree satisfying (i), (ii) and (iii), let $S$ be a $2$-packing of $T$ and let $v$ be a vertex with $\deg(v)=\Delta (T)$. Since $\partial_{2p}(T)\ge \Delta(T)-1$, we proceed to show that  $\partial_{2p}(T)\le \Delta(T)-1$. To this end, we differentiate the following three cases.

Case 1: $v\in S$. If there exists $u\in S\setminus\{v\}$, then $d(u,v)\geq 3$ and condition (i) gives that $\deg(u)=1$ (see Figure~\ref{FigureTree}). Therefore, $\partial(S)=\sum_{w\in S}(deg(w)-1)=\deg(v)-1=\Delta (T)-1$.

Case 2: $v_i\in S$, for some $v_i\in N(v)$. Notice that $N[v]\cap S=\{v_i\}$ and, if there exists $u\in S\setminus\{v_i\}$, then $d(u,v_i)\geq 3$. In such a case, $u$ belongs to a non-trivial rooted sub-tree $T_j(v)$, $j\neq i$ and $d(u,v)\geq 2$ (see Figure~\ref{FigureTree}). Clearly, $S\cap V(T_j(v))=\{u\}$ and from (ii) we obtain $\partial(S)=\sum_{w\in S}(deg(w)-1)\leq (\deg(v_i)-1) +\sum_{j\neq i} (\deg (w_j) -1)\leq \Delta(T)-1$.

Case 3: $N[v]\cap S=\emptyset$. Then every vertex  $u\in S$ satisfies $d(u,v)\geq 2$ and from (iii) we obtain $\partial(S)=\sum_{w\in S}(deg(w)-1)\leq \sum_{i} (\deg (w_i) -1)\leq \Delta(T)-1$.
\end{proof}

\begin{figure}[ht]
\centering
\begin{tikzpicture}[scale=.4, transform shape,rotate=-90]
\node [draw, shape=circle, fill=black] (a) at  (0,6) {};

\node [draw, shape=circle, fill={rgb:black,1;white,2}] (a1) at  (2,2) {};
\node [draw, shape=circle] (a2) at  (2,4) {};
\node [draw, shape=circle] (a3) at  (2,6) {};
\node [draw, shape=circle] (a4) at  (2,8) {};
\node [draw, shape=circle] (a5) at  (2,11) {};

\node [draw, shape=circle,] (b11) at  (4,1) {};
\node [draw, shape=circle] (b12) at  (4,3) {};

\node [draw, shape=circle, fill={rgb:black,1;white,2}] (b21) at  (4,4) {};

\node [draw, shape=circle] (b31) at  (4,6) {};

\node [draw, shape=circle] (b41) at  (4,7) {};
\node [draw, shape=circle, fill={rgb:black,1;white,2}] (b42) at  (4,9) {};

\node [draw, shape=circle, fill={rgb:black,1;white,2}] (b51) at  (4,10) {};
\node [draw, shape=circle] (b52) at  (4,12) {};

\node [draw, shape=circle,fill=black] (c111) at  (6,1) {};

\node [draw, shape=circle,fill=black] (c421) at  (6,9) {};

\node [draw, shape=circle,fill=black] (c511) at  (6,10) {};

\draw(a)--(a1);
\draw(a)--(a2);
\draw(a)--(a3);
\draw(a)--(a4);
\draw(a)--(a5);

\draw(a1)--(b11);
\draw(a1)--(b12);

\draw(a2)--(b21);

\draw(a3)--(b31);

\draw(a4)--(b41);
\draw(a4)--(b42);

\draw(a5)--(b51);
\draw(a5)--(b52);

\draw(b11)--(c111);

\draw(b42)--(c421);

\draw(b51)--(c511);

\end{tikzpicture}
\caption{A tree with $\partial_{2p}(T)=\Delta (T)-1 =4$. The set of black-coloured vertices is a $\partial_{2p}(T)$-set and the set of grey-coloured vertices is another one.}\label{FigureTree}
\end{figure}
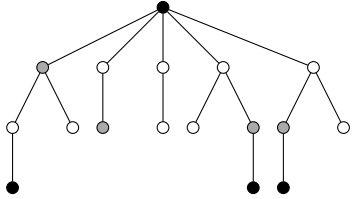

Some upper bounds of the $2$-packing differential can be also expressed in terms of the total domination number. A set $S\subseteq V(G)$ is a \emph{total dominating set} of a graph $G$ without isolated vertices  if every vertex $v\in V(G)$ is adjacent to at least one vertex in $S$. Let $\mathcal{D}_t(G)$ be the set of total dominating sets of $G$.
The \emph{total domination number} of $G$ is defined to be,
$$\gamma_t(G)=\min\{|S|:\, S\in \mathcal{D}_t(G)\}.$$
By definition, $\mathcal{D}_t(G)\subseteq \mathcal{D}(G)$, so that $\gamma(G)\le \gamma_t(G)$. Furthermore, $\gamma_t(G)\le 2\gamma(G)$.

\begin{lemma}\label{LemaDuro}
Let $G$ be a graph with no isolated vertex.
Let $S$ be a $\partial_{2p}(G)$-set such that $|S|$ is maximum among all $\partial_{2p}(G)$-sets, $S'=\{v\in V(G)\setminus N[S]: N(v)\subseteq N(S)\}$ and $S''=V(G)\setminus (N[S]\cup S')$.
The following statements hold.
\begin{enumerate}[{\rm (i)}]
\item $N(v)\cap N(S)\neq \varnothing$ for every $v\in S''$.
\item $\gamma_t(G)\leq  2|S|+|S'|+|S''|$.
\item If $\gamma_t(G)= 2|S|+|S'|+|S''|$, then $S'\cup S''=\varnothing.$
\end{enumerate}
\end{lemma}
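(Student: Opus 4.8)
The plan is to prove the three parts in the order stated, using a short augmentation argument for (i) and then a single explicit construction of a total dominating set that simultaneously yields (ii) and (iii).

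For (i), I would argue by contradiction. Suppose some $v\in S''$ satisfies $N(v)\cap N(S)=\varnothing$. Since $v\in S''$ we have $v\notin N[S]$, so $v$ is adjacent to no vertex of $S$ and, together with $N(v)\cap N(S)=\varnothing$, shares no common neighbour with any $u\in S$; hence $N[v]\cap N[u]=\varnothing$ for every $u\in S$, so $S\cup\{v\}$ is again a $2$-packing. By Theorem~\ref{th-packings}(i) we get $\partial(S\cup\{v\})=\partial(S)+(\deg(v)-1)\ge \partial(S)=\partial_{2p}(G)$, because $G$ has no isolated vertex. Optimality of $\partial_{2p}(G)$ then forces $\deg(v)=1$ and makes $S\cup\{v\}$ a $\partial_{2p}(G)$-set of strictly larger cardinality, contradicting the maximality of $|S|$. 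Therefore $N(v)\cap N(S)\neq\varnothing$ for every $v\in S''$.

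For (ii), I would build a total dominating set directly. Since $S$ is a $2$-packing, the neighbourhoods $N(u)$ with $u\in S$ are pairwise disjoint, so selecting one neighbour $g(u)\in N(u)$ for each $u$ yields $|S|$ distinct vertices, and $S\cup g(S)$ totally dominates $N[S]$. The essential observation is that the vertices of $S'$ and $S''$ should be dominated \emph{from outside} rather than placed in the set: each $v\in S'$ has $N(v)\subseteq N(S)$, while each $v\in S''$ has a neighbour in $N(S)$ by part (i); choosing one such neighbour per vertex gives a set $P\subseteq N(S)$ with $|P|\le |S'|+|S''|$. Then $T=S\cup g(S)\cup P$ is a total dominating set, since every vertex of $N(S)$ (in particular those in $P$) is dominated by $S$, each $u\in S$ is dominated by $g(u)$, and each vertex of $S'\cup S''$ is dominated by its chosen neighbour in $P$. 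Hence $\gamma_t(G)\le |T|\le 2|S|+|S'|+|S''|$.

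For (iii) I would prove the contrapositive. Assume $S'\cup S''\neq\varnothing$ and fix $x\in S'\cup S''$; choose a neighbour $z\in N(S)$ of $x$ and let $u\in S$ be the unique packing vertex with $z\in N(u)$. Repeating the construction of (ii) but setting $g(u)=z$, the vertex $x$ is now dominated by $z\in T$ for free, so only the remaining at most $|S'|+|S''|-1$ vertices of $S'\cup S''$ require a dominator in $P$. This produces a total dominating set of size at most $2|S|+|S'|+|S''|-1$, whence $\gamma_t(G)<2|S|+|S'|+|S''|$, and the contrapositive yields (iii). I expect the main obstacle to be the careful verification that the constructed $T$ really is a total dominating set — in particular that every vertex of $T$ itself, not merely every vertex outside $T$, has a neighbour in $T$ — and, in (iii), arranging the single overlap $g(u)=z$ so that exactly one token is saved without disturbing the total domination of $N[S]$.
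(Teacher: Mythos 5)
Your proposal is correct and takes essentially the same approach as the paper's proof: the identical augmentation argument for (i), and for (ii)--(iii) the same idea of forming a total dominating set from $S$ together with one chosen neighbour in $N(S)$ for each vertex of $S\cup S'\cup S''$, saving one vertex when some $x\in S'\cup S''$ exists because its chosen neighbour also serves the unique vertex of $S$ adjacent to it. The only cosmetic difference is that the paper packages the chosen neighbours as a minimum-cardinality set $W'\subseteq N(S)$ hitting every $N(x)$, $x\in S\cup S'\cup S''$, and invokes its minimality in (iii), whereas you build the smaller total dominating set explicitly.
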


\begin{proof}
Suppose that there exists a vertex $v\in S''$ such that $N(v)\subseteq S''$. Notice that $S_v=S\cup \{v\}$ is a $2$-packing of $G$ and that $\partial(S_v)\geq \partial(S)=\partial_{2p}(G)$, which is a contradiction because $|S_v|>|S|$. Hence, for every $v\in S''$, it follows that $N(v)\cap N(S)\neq \varnothing$.

Now, we define $W'$ as a set of minimum cardinality among the sets $W\subseteq N(S)$ satisfying  that $N(x)\cap  W\ne \varnothing$ for every $x\in S\cup S'\cup S''$.

Notice that $S \cup W'$ is a total dominating set of $G$, which implies that $\gamma_t(G)\leq |S| + |W'|\leq 2|S|+|S'|+|S''|$.

In order to prove (iii) assume $\gamma_t(G)= 2|S|+|S'|+|S''|$. In this case, $|W'|=|S|+|S'|+|S''|$. Suppose that there exists $x\in S'\cup S''$.
Since there exists  $y\in N(x)\cap W'\cap N(S)$,  the minimality of $|W'|$ leads to  $ |W'|< |S|+|S'|+ |S''|$, which is a contradiction. Therefore, $S'\cup S''=\varnothing.$
\end{proof}

\begin{theorem} \label{BounGammaTotal}
For any graph $G$ with no isolated vertex,  $$ \partial_{2p}(G)\leq \n(G)-\gamma_t(G).$$
Furthermore, the equality holds  if and only if $G$ is an efficient closed domination graph with $\gamma_t(G)=2\gamma(G)$.
\end{theorem}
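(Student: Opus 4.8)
The plan is to extract the upper bound directly from Lemma~\ref{LemaDuro} and then pin down the equality case by combining parts (ii) and (iii) of that lemma with Theorem~\ref{Lower bound packing=dominating}(i). Since Lemma~\ref{LemaDuro} already does the technical work, the theorem follows with little extra effort, provided one is careful with the choice of the optimal set and the underlying vertex partition.

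For the upper bound, I would take $S$ to be a $\partial_{2p}(G)$-set whose cardinality $|S|$ is maximum among all $\partial_{2p}(G)$-sets, and define $S'$ and $S''$ exactly as in Lemma~\ref{LemaDuro}. The key structural observation is that, because $S$ is a $2$-packing, $S$ and $N(S)$ are disjoint, so $V(G)$ is partitioned into the four sets $S$, $N(S)$, $S'$ and $S''$. This yields the identity $\n(G)=|S|+|N(S)|+|S'|+|S''|$, whence
$$\partial_{2p}(G)=\partial(S)=|N(S)|-|S|=\n(G)-\bigl(2|S|+|S'|+|S''|\bigr).$$
Lemma~\ref{LemaDuro}(ii) gives $2|S|+|S'|+|S''|\ge\gamma_t(G)$, and substituting this inequality into the displayed equality produces $\partial_{2p}(G)\le\n(G)-\gamma_t(G)$, as required.

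For the forward direction of the equivalence, I would argue that equality forces $2|S|+|S'|+|S''|=\gamma_t(G)$ in the chain above. Lemma~\ref{LemaDuro}(iii) then gives $S'\cup S''=\varnothing$, so that $N[S]=V(G)$; hence the $2$-packing $S$ is also a dominating set, i.e.\ $S\in\mathcal{D}(G)\cap\wp(G)$, showing that $G$ is an efficient closed domination graph with $|S|=\rho(G)=\gamma(G)$. Since $\gamma_t(G)=2|S|$, we conclude $\gamma_t(G)=2\gamma(G)$.

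For the converse, I would assume $G$ is an efficient closed domination graph with $\gamma_t(G)=2\gamma(G)$ and invoke the lower bound of Theorem~\ref{Lower bound packing=dominating}(i), namely $\partial_{2p}(G)\ge\n(G)-2\rho(G)$. Using $\rho(G)=\gamma(G)$ (valid for efficient closed domination graphs) together with $\gamma_t(G)=2\gamma(G)$ turns the right-hand side into $\n(G)-\gamma_t(G)$, so that $\partial_{2p}(G)\ge\n(G)-\gamma_t(G)$; combined with the already-proved upper bound this gives equality. The only delicate point in the whole argument is the choice of $S$ with maximum cardinality, which is precisely what makes Lemma~\ref{LemaDuro}(iii) applicable; everything else is the partition bookkeeping above, so I do not anticipate a genuine obstacle once the lemma is in hand.
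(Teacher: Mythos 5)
Your proposal is correct and follows essentially the same route as the paper's own proof: the upper bound via Lemma~\ref{LemaDuro}(ii) applied to a maximum-cardinality $\partial_{2p}(G)$-set together with the partition $V(G)=S\cup N(S)\cup S'\cup S''$, the forward implication via Lemma~\ref{LemaDuro}(iii), and the converse via the lower bound of Theorem~\ref{Lower bound packing=dominating}(i) combined with $\rho(G)=\gamma(G)$ for efficient closed domination graphs. No gaps; this is the paper's argument.
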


\begin{proof}
Let $S$ be a $\partial_{2p}(G)$-set such that $|S|$ is maximum among all $\partial_{2p}(G)$-sets, $S'=\{v\in V(G)\setminus N[S]: N(v)\subseteq N(S)\}$ and $S''=V(G)\setminus (N[S]\cup S')$. By Lemma \ref{LemaDuro},  $\gamma_t(G)\leq 2|S|+|S'|+|S''|$. Hence,
$$\partial_{2p}(G)= |N(S)|-|S|=\n(G)-2|S|-|S'|-|S''|\le \n(G)-\gamma_t(G).$$
Therefore, the upper bound follows.

Now, if $\mathcal{D}(G)\cap \wp(G)\ne \varnothing $ and  $\gamma_t(G)=2\gamma(G)$, then from the upper bound above and the lower bound given in Theorem \ref{Lower bound packing=dominating} we deduce that $\partial_{2p}(G)=\n(G)-\gamma_t(G)$.  Conversely, if  $\partial_{2p}(G)=\n(G)-\gamma_t(G)$, then  we have equalities in the above inequality chain. In particular, we deduce that $2|S|+|S'|+|S''|=\gamma_t(G)$, and Lemma \ref{LemaDuro} leads to   $S'\cup S''=\varnothing$. Hence, $S\in \mathcal{D}(G)\cap \wp(G)$ and so,  $\gamma_t(G)=2|S|=2\gamma(G)$, which completes the proof.
\end{proof}

\begin{theorem}\label{Second bound size and order}
For any graph $G$ with no isolated vertex,
$$ \partial_{2p}(G)\le \frac{2\m(G)+\rho(G)(\delta(G)-1)-\gamma_t(G)\delta(G)}{\delta(G)+1}.$$
Furthermore, the equality holds if and only if  $\gamma_t(G)=2\gamma(G)$ and  there exists  a set  $S\in \mathcal{D}(G)\cap \wp(G)$ such that $\deg(v)=\delta(G)$ for every $v\in V(G)\setminus S$.
\end{theorem}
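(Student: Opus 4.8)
My plan is to obtain this bound by combining the two upper bounds already at hand, namely the order--size estimate of Theorem~\ref{Bound size and order} and the total-domination estimate of Theorem~\ref{BounGammaTotal}. Writing $P=\partial_{2p}(G)$ for brevity, Theorem~\ref{Bound size and order} rearranges to $\n(G)\delta(G)\le \rho(G)(\delta(G)-1)+2\m(G)-P$, while Theorem~\ref{BounGammaTotal} gives $\n(G)\ge P+\gamma_t(G)$, so multiplying by $\delta(G)\ge 1$ yields $(P+\gamma_t(G))\delta(G)\le \n(G)\delta(G)$. Chaining these produces
$$(P+\gamma_t(G))\,\delta(G)\le \n(G)\,\delta(G)\le \rho(G)(\delta(G)-1)+2\m(G)-P,$$
and solving the resulting inequality $(P+\gamma_t(G))\delta(G)\le \rho(G)(\delta(G)-1)+2\m(G)-P$ for $P$ and dividing by $\delta(G)+1$ gives exactly the claimed bound.

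For the equality part, I would first note that since $\n(G)\delta(G)$ is sandwiched between the two outer quantities of the displayed chain, equality in the final bound is equivalent to equality holding \emph{simultaneously} in Theorem~\ref{Bound size and order} and in Theorem~\ref{BounGammaTotal} (here $\delta(G)\ge 1$ is what lets me cancel the factor $\delta(G)$ and recover equality in Theorem~\ref{BounGammaTotal}). By the characterisations already established, equality in Theorem~\ref{BounGammaTotal} means $G$ is an efficient closed domination graph with $\gamma_t(G)=2\gamma(G)$, while equality in Theorem~\ref{Bound size and order} means there is a $\rho(G)$-set $S$ with $\deg(v)=\delta(G)$ for every $v\in V(G)\setminus S$.

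The step I expect to be the crux is reconciling these two conditions into the single statement requested, i.e.\ producing one set that is at once a dominating $2$-packing \emph{and} carries the degree restriction. The backward implication is immediate: a set $S\in\mathcal{D}(G)\cap\wp(G)$ already forces $G$ to be an efficient closed domination graph with $|S|=\gamma(G)=\rho(G)$, so together with $\gamma_t(G)=2\gamma(G)$ and the degree condition it makes both earlier bounds tight, hence the combined bound tight. For the forward implication I would take the $\rho(G)$-set $S$ provided by equality in Theorem~\ref{Bound size and order}; its proof gives $\partial(S)=\rho(G)(\delta(G)-1)+2\m(G)-\n(G)\delta(G)=P$, so $S$ is a $\partial_{2p}(G)$-set, and since $|S|=\rho(G)$ is the largest possible cardinality of any $2$-packing, $S$ has maximum cardinality among all $\partial_{2p}(G)$-sets. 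I can then apply Lemma~\ref{LemaDuro} to $S$: using $P=\n(G)-2|S|-|S'|-|S''|$ together with the equality $P=\n(G)-\gamma_t(G)$ from Theorem~\ref{BounGammaTotal}, I obtain $\gamma_t(G)=2|S|+|S'|+|S''|$, whence Lemma~\ref{LemaDuro}(iii) forces $S'\cup S''=\varnothing$. This means $N[S]=V(G)$, so $S$ is dominating, i.e.\ $S\in\mathcal{D}(G)\cap\wp(G)$, and it still satisfies the degree restriction; with $\gamma_t(G)=2\gamma(G)$ this is precisely the asserted condition. The delicate point throughout is that the degree condition and the (total) domination conditions must be borne by the \emph{same} set, and it is exactly the maximum-cardinality hypothesis in Lemma~\ref{LemaDuro} that guarantees this.
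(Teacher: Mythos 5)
Your proposal is correct, and it reaches the bound by a genuinely different route than the paper. The paper proves the inequality from scratch: it takes a $\partial_{2p}(G)$-set $S$ of maximum cardinality, splits $2\m(G)$ over $S$, $N(S)$ and $V(G)\setminus N[S]$ (using that every vertex of $N(S)$ has exactly one neighbour in $S$, by Theorem~\ref{th-packings}~(i)), inserts the estimate $\gamma_t(G)\le 2|S|+|V(G)\setminus N[S]|$ from Lemma~\ref{LemaDuro}~(ii) into that edge count, and then reads the equality conditions off its own inequality chain. You instead observe that the bound is exactly the algebraic superposition of Theorem~\ref{Bound size and order} and Theorem~\ref{BounGammaTotal}: the chain $(\partial_{2p}(G)+\gamma_t(G))\delta(G)\le \n(G)\delta(G)\le \rho(G)(\delta(G)-1)+2\m(G)-\partial_{2p}(G)$ rearranges to the claim, and since $\delta(G)\ge 1$, equality holds if and only if both earlier bounds are simultaneously tight, which lets you quote their equality characterisations rather than re-derive them. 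What your approach buys is modularity and transparency (the new bound is visibly the conjunction of two old ones), at the cost of having to reconcile the two separate witnesses into the single set demanded by the statement; you correctly identify this as the crux and resolve it properly: the $\rho(G)$-set $S$ from Theorem~\ref{Bound size and order} satisfies $\partial(S)=\partial_{2p}(G)$ (as the paper itself notes in the proof of Theorem~\ref{Lower bound packing=dominating}), has maximum cardinality among $\partial_{2p}(G)$-sets because $|S|=\rho(G)$, and then $\gamma_t(G)=\n(G)-\partial_{2p}(G)=2|S|+|S'|+|S''|$ combined with Lemma~\ref{LemaDuro}~(iii) forces $S'\cup S''=\varnothing$, i.e.\ $S\in\mathcal{D}(G)\cap\wp(G)$. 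This merging step is in fact handled more explicitly in your write-up than in the paper, whose forward direction jumps from equality in Theorem~\ref{BounGammaTotal} to $\mathcal{D}(G)\cap\wp(G)\ne\varnothing$ and leaves the fact that the \emph{same} set carries the degree condition largely implicit; your argument makes that point airtight.
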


\begin{proof}
Let $S$ be a $\partial_{2p}(G)$-set such that $|S|$ is maximum among all $\partial_{2p}(G)$-sets, and let $m'$ be the size of the subgraph of $G$ induced by $V(G)\setminus S$.
 Notice that   every vertex in $N(S)$ has exactly one neighbour in $S$, which implies that  $$m'=\frac{1}{2}\left(\displaystyle \sum_{v\in N(S)}(\deg(v)-1)+\sum_{v\in V(G)\setminus N[S]}\deg(v)\right).$$
 Observe that Lemma \ref{LemaDuro} leads to  $\gamma_t(G)\le 2|S|+|V(G)\setminus N[S]|$. Hence,
\begin{align*}
\m(G)&=\sum_{v\in S}\deg(v)+m'\\
&=\left(\partial_{2p}(G)+|S|\right)+\frac{1}{2}\left(\displaystyle \sum_{v\in N(S)}(\deg(v)-1)+\sum_{v\in V(G)\setminus N[S]}\deg(v)\right),\, \text{ by Theorem \ref{th-packings} (i),}
\\
&\ge \partial_{2p}(G)+|S| +\frac{1}{2}\left(|N(S)|(\delta(G)-1)+(\gamma_t(G)-2|S|)\delta(G)\right)
\\
&\ge \partial_{2p}(G)+\frac{1}{2}(|S|+(|S|-|N(S)|)+(|N(S)|-|S|)\delta(G)+\gamma_t(G)\delta(G)-|S|\delta(G))\\
&= \partial_{2p}(G)+\frac{1}{2}\left( \partial_{2p}(G)(\delta(G)-1)-(\delta(G)-1)|S| + \gamma_t(G)\delta(G)  \right)\\
&\ge \partial_{2p}(G)+\frac{1}{2}\left( \partial_{2p}(G)(\delta(G)-1)-(\delta(G)-1)\rho(G) + \gamma_t(G)\delta(G)  \right).
\end{align*}
Therefore, the upper bound follows.

Now, if the bound is achieved, then from the previous procedure  we deduce that $\deg(v)=\delta(G)$ for every $v\in V(G)\setminus S$, $|S|=\rho(G)$ and $\gamma_t(G)=2|S|+|V(G)\setminus N[S]|=\n(G)-\partial_{2p}(G)$. Thus, by  Theorem \ref{BounGammaTotal} we conclude that
$\mathcal{D}(G)\cap \wp(G)\ne \varnothing $ and  $\gamma_t(G)=2\gamma(G)$.
Conversely, if $\gamma_t(G)=2\gamma(G)$ and there exists a set $D\in \mathcal{D}(G)\cap \wp(G)$ such that $\deg(v)=\delta(G)$ for every $v\in V(G)\setminus D$, then the same procedure leads to $$2\m(G)+\rho(G)(\delta(G)-1)-\gamma_t(G)\delta(G)=(\delta(G)+1)\partial (D)\le (\delta(G)+1)\partial_{2p}(G),$$ and by the upper bound we conclude that the equality holds.
\end{proof}

To conclude this section, we derive some bounds in terms of the order of $G$.

\begin{theorem} \label{TrivialBound}
The following statements hold for any graph $G$ with no isolated vertex.
\begin{enumerate}[{(i)}]
\item If $k\in \{1,2,3\}$ and $\Delta(G)=\n(G)-k$, then $\partial_{2p}(G)= \n(G)-k-1.$

\item If $\Delta(G)\le \n(G)-4$, then $\partial_{2p}(G)\le \n(G)-4$ and the equality holds if and only if $G$ is an efficient closed domination graph with  $\gamma(G)=2$.
\end{enumerate}
\end{theorem}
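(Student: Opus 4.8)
The plan is to reduce everything to a single elementary packing inequality and then to distinguish cases according to the size of the packing. The key observation is that if $S\in\wp(G)$ then the closed neighbourhoods $\{N[v]:v\in S\}$ are pairwise disjoint, so $\sum_{v\in S}(\deg(v)+1)=\sum_{v\in S}|N[v]|\le \n(G)$. Combined with Theorem~\ref{th-packings}(i), this gives the uniform estimate $\partial(S)=\sum_{v\in S}(\deg(v)-1)\le \n(G)-2|S|$ for every $2$-packing $S$. I would then record the resulting dichotomy: a $2$-packing with $|S|\ge 2$ satisfies $\partial(S)\le \n(G)-4$, while one with $|S|\le 1$ satisfies $\partial(S)\le \Delta(G)-1$ (the empty set giving $\partial=0\le\Delta(G)-1$ since $G$ has no isolated vertex). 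These two regimes are all that the whole proof needs.

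For (i), recall from Theorem~\ref{ThDominationIndep2} the lower bound $\partial_{2p}(G)\ge\Delta(G)-1=\n(G)-k-1$. For the matching upper bound I would take any $\partial_{2p}(G)$-set $S$ and apply the dichotomy: if $|S|\le 1$ then $\partial(S)\le\Delta(G)-1=\n(G)-k-1$ directly, and if $|S|\ge 2$ then $\partial(S)\le\n(G)-4\le\n(G)-k-1$, the last inequality being exactly the hypothesis $k\le 3$. Hence $\partial_{2p}(G)\le\n(G)-k-1$, and together with the lower bound equality follows. The role of the restriction $k\le 3$ is precisely to make the packing bound $\n(G)-4$ no larger than the target value $\n(G)-k-1$.

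For the inequality in (ii), with $\Delta(G)\le\n(G)-4$ the $|S|\le 1$ regime gives $\partial(S)\le\Delta(G)-1\le\n(G)-5$, while the $|S|\ge 2$ regime gives $\partial(S)\le\n(G)-4$; maximising over $\wp(G)$ yields $\partial_{2p}(G)\le\n(G)-4$. For the easy direction of the equivalence, if $G$ is an efficient closed domination graph with $\gamma(G)=2$ then $\rho(G)=\gamma(G)=2$, so Theorem~\ref{Lower bound packing=dominating}(i) gives $\partial_{2p}(G)\ge\n(G)-2\rho(G)=\n(G)-4$, which combined with the upper bound forces equality.

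The delicate direction, and the main obstacle, is showing that $\partial_{2p}(G)=\n(G)-4$ forces efficient closed domination with $\gamma(G)=2$; here I would trace the equality backwards through the dichotomy. Let $S$ be a $\partial_{2p}(G)$-set. Since $\Delta(G)-1\le\n(G)-5<\n(G)-4$, equality rules out $|S|\le 1$, so $|S|\ge 2$ and the chain $\n(G)-4=\partial(S)\le\n(G)-2|S|\le\n(G)-4$ must consist of equalities. This separates into the two independent conditions $|S|=2$ and $\sum_{v\in S}|N[v]|=\n(G)$; the latter, by disjointness of the closed neighbourhoods, means $N[S]=V(G)$, i.e.\ $S$ is a dominating set. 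Thus $S\in\mathcal{D}(G)\cap\wp(G)$ is nonempty, so $G$ is an efficient closed domination graph and $\gamma(G)=\rho(G)$; since $S$ is both dominating and a $2$-packing of size $2$ we get $\gamma(G)\le 2\le\rho(G)=\gamma(G)$, whence $\gamma(G)=2$. The only point requiring care is verifying that the equality in $\partial(S)\le\n(G)-2|S|\le\n(G)-4$ genuinely decouples into $|S|=2$ and $N[S]=V(G)$ rather than being absorbed into a single degenerate configuration, which is immediate once the packing inequality is written out as an equality.
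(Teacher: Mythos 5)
Your proof is correct and follows essentially the same route as the paper's: the packing inequality $\partial(S)\le \n(G)-2|S|$ combined with the dichotomy $|S|\le 1$ versus $|S|\ge 2$, the lower bound $\partial_{2p}(G)\ge \Delta(G)-1$, and Theorem~\ref{Lower bound packing=dominating}(i) for the efficient-closed-domination direction. In fact your uniform treatment of (i) is slightly more robust than the paper's, which handles $k\in\{1,2\}$ by asserting $\rho(G)=1$ --- an assertion that fails for $G=P_4$ (where $\Delta(G)=\n(G)-2$ but $\rho(G)=2$) --- whereas your dichotomy covers that case correctly without it.
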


\begin{proof}
 Assume  $\Delta(G)=\n(G)-k$. If $k\in\{1,2\}$, then $\rho(G)=1$ and so  $\partial_{2p}(G)=\Delta(G)-1=\n(G)-k-1$.
Now, let $S$ be a $\partial_{2p}(G)$-set.
If $k=3$ then either $\rho(G)=1$, and we proceed as above, or $|S|\ge 2$, which implies that $\n(G)-4=\Delta(G)-1\le
\partial_{2p}(G)=\partial (S)= |N(S)|-|S|\le \n(G)-2|S|\le \n(G)-4.$
 Therefore, (i) follows.

From now on, assume  $\Delta(G)\le \n(G)-4$. Let  $S$ be a $\partial_{2p}(G)$-set.
If $|S|=1$, then  $\partial_{2p}(G)= \Delta(G)-1\le \n(G)-5.$ Now, if $|S|\ge 2$, then $\partial_{2p}(G)=\partial (S)= |N(S)|-|S|\le \n(G)-2|S|\le \n(G)-4.$ Therefore, the equality $\partial_{2p}(G)=\partial(S)=\n(G)-4$ holds if and only if $S\in \mathcal{D}(G)\cap \wp(G)$ and $|S|=2$.
\end{proof}

\section{Nordhaus-Gaddum type relations} \label{SectionNordhaus-Gaddum}

Nordhaus and Gaddum \cite{Nordhaus-Gaddum} in 1956 proposed lower and upper bounds, in terms of the order of the graph, on the sum and the product of the chromatic number of a graph and its complement. Since then, several inequalities of a similar type have been proposed for other graph parameters. In this section we derive some Nordhaus-Gaddum type relations for the $2$-packing differential.

A set $S\subseteq V(G)$ is an \emph{open packing}  if $N(u)\cap N(v)=\varnothing$ for every pair of different  vertices $u,v\in S$.
 The \emph{open packing number} of $G$, denoted by $\rho_o(G)$, is the maximum cardinality among all open packings of $G$.
A graph $G$ is an \emph{efficient open domination graph} if there exists
an open packing   which is a dominating set.

Notice that the complement of $G$, denoted by $G^c$, is an efficient closed domination graph with  $\gamma(G^c)=2$ if and only if $G$ is an efficient open domination graph with $\rho_o(G)=2$.

By Theorem \ref{TrivialBound} and the lower bound given in Theorem \ref{ThDominationIndep2}, we deduce the following result.

\begin{theorem}
Given a graph  $G$, the following statements hold.
\begin{itemize}
\item  If  $\Delta(G)= \n(G)-2$ and $\delta(G)=1$, then
$ \partial_{2p}(G)+\partial_{2p}(G^c)=2\n(G)-6$ and $ \partial_{2p}(G)\cdot \partial_{2p}(G^c)=(\n(G)-3)^2$.

\item  If  $\Delta(G)= \n(G)-2$ and $\delta(G)=2$, then
$ \partial_{2p}(G)+\partial_{2p}(G^c)=2\n(G)-7$ and $\partial_{2p}(G)\cdot \partial_{2p}(G^c)=(\n(G)-3)(\n(G)-4)$.

\item  If  $\Delta(G)= \n(G)-3$ and $\delta(G)=1$, then
$ \partial_{2p}(G)+\partial_{2p}(G^c)=2\n(G)-7$ and $ \partial_{2p}(G)\cdot \partial_{2p}(G^c)=(\n(G)-3)(\n(G)-4)$.

\item  If  $\Delta(G)= \n(G)-3$ and $\delta(G)=2$, then
$ \partial_{2p}(G)+\partial_{2p}(G^c)=2\n(G)-8 $ and $ \partial_{2p}(G)\cdot \partial_{2p}(G^c)=(\n(G)-4)^2$.

\item  If  $\Delta(G)= \n(G)-2$ and $\delta(G)\ge 3$, then
$$2\n(G)-\delta(G)-5\le \partial_{2p}(G)+\partial_{2p}(G^c)\leq 2\n(G)-7,$$
$$(\n(G)-3)(\n(G)-\delta (G)-2)\le \partial_{2p}(G)\cdot \partial_{2p}(G^c)\leq (\n(G)-3)(\n(G)-4).$$

\item  If  $\Delta(G)= \n(G)-2$ and $\delta(G)\ge 3$, then
$ \partial_{2p}(G)+\partial_{2p}(G^c)= 2\n(G)-7$ if and only if  $\partial_{2p}(G)\cdot \partial_{2p}(G^c)= (\n(G)-3)(\n(G)-4)$
if and only if  $G$ is an efficient open domination graph with $\rho_o(G)=2$.

\item   If  $\Delta(G)= \n(G)-3$ and $\delta(G)\ge 3$, then
$$2\n(G)-\delta(G)-6\le \partial_{2p}(G)+\partial_{2p}(G^c)\leq 2\n(G)-8,$$
$$(\n(G)-4)(\n(G)-\delta (G)-2)\le \partial_{2p}(G)\cdot \partial_{2p}(G^c)\leq (\n(G)-4)^2.$$

\item  If  $\Delta(G)= \n(G)-3$ and $\delta(G)\ge 3$, then
$\partial_{2p}(G)+\partial_{2p}(G^c)= 2\n(G)-8 $ if and only if $\partial_{2p}(G)\cdot \partial_{2p}(G^c)= (\n(G)-4)^2$ if and only if  $G$ is an efficient open domination graph with $\rho_o(G)=2$.

\item  If  $\Delta(G)\le  \n(G)-4$ and $\delta(G)=1$, then
$$\n(G)+\Delta(G)-4\le \partial_{2p}(G)+\partial_{2p}(G^c)\leq 2\n(G)-7,$$
$$(\Delta(G)-1)(\n(G)-3)\le \partial_{2p}(G)\cdot \partial_{2p}(G^c)\leq (\n(G)-3)(\n(G)-4).$$

\item  If  $\Delta(G)\le  \n(G)-4$ and $\delta(G)=1$, then
$ \partial_{2p}(G)+\partial_{2p}(G^c)= 2\n(G)-7 $ if and only if $\partial_{2p}(G)\cdot \partial_{2p}(G^c)=(\n(G)-4)(\n(G)-3)$
if and only if $G$ is an efficient closed domination graph with  $\gamma(G)=2$.

\item  If  $\Delta(G)\le  \n(G)-4$ and $\delta(G)=2$, then
$$\n(G)+\Delta(G)-5\le \partial_{2p}(G)+\partial_{2p}(G^c)\leq 2\n(G)-8,$$
$$(\Delta(G)-1)(\n(G)-4)\le \partial_{2p}(G)\cdot \partial_{2p}(G^c)\leq (\n(G)-4)^2.$$

\item  If  $\Delta(G)\le \n(G)-4$ and $\delta(G)= 2$, then
$\partial_{2p}(G)+\partial_{2p}(G^c)= 2\n(G)-8 $ if and only if $\partial_{2p}(G)\cdot \partial_{2p}(G^c)=(\n(G)-4)^2$ if and only if $G$ is an efficient closed domination graph with  $\gamma(G)=2$.

\item  If  $\Delta(G)\le \n(G)-4$ and $\delta(G)\ge 3$, then
$$\n(G)+\Delta(G)-\delta(G)-3\le \partial_{2p}(G)+\partial_{2p}(G^c)\leq 2\n(G)-8.$$
$$(\Delta(G)-1)(\n(G)-\delta (G)-2)\le \partial_{2p}(G)\cdot \partial_{2p}(G^c)\leq (\n(G)-4)^2.$$

\item  If  $\Delta(G)\le \n(G)-4$ and $\delta(G)\ge 3$, then
$\partial_{2p}(G)+\partial_{2p}(G^c)= 2\n(G)-8 $ if and only if $\partial_{2p}(G)\cdot \partial_{2p}(G^c)= (\n(G)-4)^2$ if and only if $G$ is simultaneously an efficient closed domination and  an efficient open domination graph with  $\rho_o(G)=\gamma(G)=2$.

\end{itemize}

\end{theorem}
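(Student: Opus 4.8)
The plan is to reduce every item to two ingredients: the exact-value and bound statements of Theorem~\ref{TrivialBound} applied to $G$, and the same statements applied to $G^c$, after translating the degree hypotheses through complementation. The governing observation is that for a graph of order $\n(G)$ one has $\deg_{G^c}(v)=\n(G)-1-\deg_G(v)$ for every vertex, whence
\begin{equation*}
\Delta(G^c)=\n(G)-1-\delta(G) \qquad\text{and}\qquad \delta(G^c)=\n(G)-1-\Delta(G).
\end{equation*}
Consequently each pair of hypotheses on $(\Delta(G),\delta(G))$ fixes the regime into which $G^c$ falls. For instance, if $\Delta(G)=\n(G)-2$ then $\delta(G^c)=1$, and if moreover $\delta(G)\ge 3$ then $\Delta(G^c)=\n(G)-1-\delta(G)\le \n(G)-4$; thus in the fifth item $G$ lies in the exact-value regime $\Delta(G)=\n(G)-2$ while $G^c$ lies in the bounded regime $\Delta(G^c)\le\n(G)-4$, $\delta(G^c)=1$.

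First I would dispose of the four items in which both $G$ and $G^c$ have maximum degree in $\{\n(G)-2,\n(G)-3\}$ (items one through four). There Theorem~\ref{TrivialBound}(i) yields the exact values $\partial_{2p}(G)=\Delta(G)-1$ and $\partial_{2p}(G^c)=\Delta(G^c)-1=\n(G)-2-\delta(G)$, and the asserted sums and products follow by direct substitution. Next I would establish the range inequalities in the items where exactly one of $G,G^c$ has maximum degree at most $\n(G)-4$ (items five, seven, nine and eleven). For the graph still in the exact-value regime, Theorem~\ref{TrivialBound}(i) fixes its $2$-packing differential as a constant; for the other graph, Theorem~\ref{TrivialBound}(ii) supplies the upper bound $\n(G)-4$ while the lower bound $\Delta(\cdot)-1$ of Theorem~\ref{ThDominationIndep2} supplies the matching lower estimate. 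Adding and multiplying these produces exactly the stated inequalities, the lower endpoints being $\Delta(G)-1$ or $\n(G)-\delta(G)-2=\Delta(G^c)-1$ according to which graph carries the small maximum degree. The remaining range item (item thirteen) is the two-sided case $\Delta(G)\le\n(G)-4$ and $\Delta(G^c)\le\n(G)-4$; here Theorem~\ref{TrivialBound}(ii) bounds both differentials above by $\n(G)-4$ and Theorem~\ref{ThDominationIndep2} bounds them below by $\Delta(G)-1$ and $\Delta(G^c)-1=\n(G)-\delta(G)-2$, so the sum and product estimates again follow by combining endpoints.

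For the equality characterizations I would invoke the tight case of Theorem~\ref{TrivialBound}(ii): $\partial_{2p}(H)=\n(H)-4$ holds exactly when $H$ is an efficient closed domination graph with $\gamma(H)=2$. In each one-sided equality item (items six, eight, ten and twelve) only one of $G,G^c$ carries the variable term, so the sum attains its upper bound if and only if the product does (both are strictly increasing in the single nonconstant factor, the constant factor being positive), and this happens if and only if that graph realizes $\partial_{2p}=\n(G)-4$. When the variable graph is $G$ (items ten and twelve) this is precisely the condition that $G$ be an efficient closed domination graph with $\gamma(G)=2$; when the variable graph is $G^c$ (items six and eight) I would rewrite it through the observation recorded just before the theorem, that $G^c$ is an efficient closed domination graph with $\gamma(G^c)=2$ if and only if $G$ is an efficient open domination graph with $\rho_o(G)=2$. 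Finally, in the two-sided equality item (item fourteen) the value $2\n(G)-8$ is met if and only if both $\partial_{2p}(G)=\n(G)-4$ and $\partial_{2p}(G^c)=\n(G)-4$, that is, if and only if $G$ is simultaneously an efficient closed domination graph with $\gamma(G)=2$ and an efficient open domination graph with $\rho_o(G)=2$.

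I do not anticipate a genuine obstacle: the argument is a systematic bookkeeping of which of the two complementary graphs falls into the exact-value regime of Theorem~\ref{TrivialBound}(i) and which into the bounded regime of Theorem~\ref{TrivialBound}(ii). The one point demanding care is the translation of the $G^c$ equality conditions into intrinsic conditions on $G$ through the efficient-open-domination observation, together with the remark that in each one-sided item equality of the sum and equality of the product are genuinely equivalent because the constant factor is positive.
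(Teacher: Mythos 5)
Your proposal is correct and takes essentially the same route as the paper: the paper's entire proof is the one-line remark that the theorem follows from Theorem~\ref{TrivialBound} and the lower bound $\Delta(G)-1\le\partial_{2p}(G)$ of Theorem~\ref{ThDominationIndep2}, combined through the complement identities $\Delta(G^c)=\n(G)-1-\delta(G)$, $\delta(G^c)=\n(G)-1-\Delta(G)$ and the observation, stated just before the theorem, that $G^c$ is an efficient closed domination graph with $\gamma(G^c)=2$ exactly when $G$ is an efficient open domination graph with $\rho_o(G)=2$. Your case-by-case bookkeeping supplies precisely the details the paper leaves implicit, including the correct handling of the equality characterizations via the tight case of Theorem~\ref{TrivialBound}(ii).
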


\section{Perfect differential versus $2$-packing differential }\label{SectionPerfectDiff}

In this section we show some relationships between the $2$-packing differential and the perfect differential of a graph. Given a set $S\subseteq V(G)$ and a vertex $v\in S$, the \emph{external private neighbourhood} $\epn(v,S)$ of $v$ with respect to $S$ is defined to be $\epn(v,S)=\{u\in V(G)\setminus S: \, N(u)\cap S=\{v\}\}$.
The \emph{perfect neighbourhood of a set}  $S\subseteq V(G)$ is defined to be $N_p(S)=\{v\in V(G)\setminus S:\, |N(v)\cap S|=1\}$. We define the \emph{perfect differential of a set} $S\subseteq V(G)$ as $\partial_p(S)=|N_p(S)|-|S|$. The \emph{perfect differential of a graph}, introduced by Cabrera Mart\'{i}nez  and Rodr\'{i}guez-Vel\'{a}zquez in  \cite{PerfectDifferential},
is defined  as
$$\partial_p(G)=\max \{\partial_p(S):\, S\subseteq V(G)\}.$$

Now, $S\subseteq V(G)$ is a \emph{perfect dominating set} of $G$ if
every vertex  in $V(G)\setminus S$ is adjacent to exactly one vertex in $S$.
Let $\mathcal{D}^p(G)$ be the set of perfect dominating sets of $G$.
The \emph{perfect domination number} of $G$ is defined to be,
$$\gamma^p(G)=\min\{|S|:\, S\in \mathcal{D}^p(G)\}.$$
 Notice that  $\mathcal{D}^p(G)\subseteq \mathcal{D}(G)$, which implies that $\gamma(G)\le \gamma^{p}(G)$.

\begin{theorem}{\rm \cite{PerfectDifferential}}\label{Prop-RElat-Differentials}
Given a nontrivial graph $G$, the following inequality chain holds.
$$\n(G)-2\gamma^p(G) \le \partial_p(G)\le\frac{\n(G)(\Delta(G)-1)}{\Delta (G)+1}.$$
Furthermore, $\partial_p(G)=\frac{\n(G)(\Delta(G)-1)}{\Delta(G)+1}$   if and only if $\gamma^p(G)=\frac{\n(G)}{\Delta(G)+1}$.
\end{theorem}

The following theorems show some relationships between the $2$-packing differential and the perfect differential.

\begin{theorem}\label{ThPerfectDiffPackingDiff}
For any nontrivial graph $G$, the following statements hold.
\begin{enumerate}[{\rm (i)}]
\item $\partial_{2p}(G)\le \partial_p(G).$
\item $\partial_{2p}(G)\le\frac{\n(G)(\Delta(G)-1)}{\Delta (G)+1}.$
\item $\partial_{2p}(G)=\frac{\n(G)(\Delta(G)-1)}{\Delta (G)+1}$ if and only if $\gamma^p(G)=\frac{\n(G)}{\Delta(G)+1}$.
\end{enumerate}
\end{theorem}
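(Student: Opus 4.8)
The plan is to isolate one structural fact about 2-packings and then let Theorem~\ref{Prop-RElat-Differentials} do the rest. The crucial observation is that if $S$ is a 2-packing, then every vertex of $N_e(S)$ has \emph{exactly} one neighbour in $S$: a vertex with two neighbours $u_1,u_2\in S$ would lie in $N(u_1)\cap N(u_2)\subseteq N[u_1]\cap N[u_2]$, contradicting the packing condition. This gives $N_e(S)\subseteq N_p(S)$, and since the reverse inclusion is immediate from the definitions, we get $N_e(S)=N_p(S)$ and hence $\partial(S)=\partial_p(S)$ for every $S\in\wp(G)$. Taking the maximum over $\wp(G)$ on the left and over all subsets of $V(G)$ on the right yields $\partial_{2p}(G)=\max_{S\in\wp(G)}\partial_p(S)\le\partial_p(G)$, which is (i).

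For (ii) I would simply chain (i) with the upper bound in Theorem~\ref{Prop-RElat-Differentials}, since then $\partial_{2p}(G)\le\partial_p(G)\le\frac{\n(G)(\Delta(G)-1)}{\Delta(G)+1}$. The forward implication of (iii) is the same squeeze: if $\partial_{2p}(G)$ attains the upper bound, then $\partial_p(G)$, trapped between $\partial_{2p}(G)$ and that same value, must attain it too, and the equality characterization in Theorem~\ref{Prop-RElat-Differentials} delivers $\gamma^p(G)=\frac{\n(G)}{\Delta(G)+1}$.

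The substance lies in the reverse implication of (iii), which I expect to be the main obstacle. Assuming $\gamma^p(G)=\frac{\n(G)}{\Delta(G)+1}$, I would fix a minimum perfect dominating set $D$ and show that $D$ is in fact a 2-packing; granting this, perfect domination gives $N_p(D)=V(G)\setminus D$, so $\partial(D)=\partial_p(D)=\n(G)-2|D|=\frac{\n(G)(\Delta(G)-1)}{\Delta(G)+1}$, and combining with (ii) forces $\partial_{2p}(G)$ to equal this value. To verify that $D\in\wp(G)$, I would count the edges between $D$ and $V(G)\setminus D$: since each external vertex has exactly one neighbour in $D$, we have $\sum_{v\in D}|N(v)\setminus D|=\n(G)-|D|\le|D|\Delta(G)$, and the hypothesis $|D|=\frac{\n(G)}{\Delta(G)+1}$ forces equality throughout. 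Equality means each $v\in D$ has $\deg(v)=\Delta(G)$ with all neighbours lying outside $D$, so $D$ is independent; moreover, if distinct $u,v\in D$ shared a neighbour $w$, then $w\in V(G)\setminus D$ would have two neighbours in $D$, violating perfect domination. Hence the closed neighbourhoods of distinct vertices of $D$ are pairwise disjoint, that is, $D\in\wp(G)$, completing the argument.
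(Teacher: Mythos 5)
Your proof is correct, and parts (i), (ii) and the implication ``$\partial_{2p}(G)$ attains the bound $\Rightarrow\gamma^p(G)=\frac{\n(G)}{\Delta(G)+1}$'' coincide with the paper's argument: the same observation $N_e(S)=N_p(S)$ for $2$-packings, the same chaining with Theorem~\ref{Prop-RElat-Differentials}, and the same squeeze forcing $\partial_p(G)$ to attain its upper bound. Where you genuinely diverge is in the remaining implication of (iii). The paper argues via the domination number: from $\gamma^p(G)\ge\gamma(G)\ge\frac{\n(G)}{\Delta(G)+1}$ it deduces $\gamma(G)=\frac{\n(G)}{\Delta(G)+1}$ and then invokes the known characterization (cited from the Haynes--Hedetniemi--Slater book) that this happens if and only if some $\gamma(G)$-set is a $\rho(G)$-set all of whose vertices have degree $\Delta(G)$; Theorem~\ref{th-packings} then gives $\partial_{2p}(G)=\rho(G)(\Delta(G)-1)=\frac{\n(G)(\Delta(G)-1)}{\Delta(G)+1}$. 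You instead work directly with a minimum perfect dominating set $D$ and prove from scratch, by double-counting the $\n(G)-|D|$ edges leaving $D$ against the bound $|D|\Delta(G)$, that $|D|=\frac{\n(G)}{\Delta(G)+1}$ forces every $v\in D$ to have degree $\Delta(G)$ with all neighbours outside $D$, whence $D$ is a $2$-packing and $\partial(D)=\n(G)-2|D|$ meets the bound of (ii). In effect you re-derive, in this special case, the very characterization the paper cites. Your route buys self-containedness (no appeal to external results beyond Theorem~\ref{Prop-RElat-Differentials}) and exhibits explicitly the structure of the extremal set; the paper's route is shorter by outsourcing that structural step to the literature and reusing its own Theorem~\ref{th-packings}.
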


\begin{proof}
Let $S$ be a $\partial_{2p}(G)$-set. Since $S$ is a $2$-packing, $N(S)=N_p(S)$. Hence,
$\partial_{2p}(G)=\partial(S)=|N(S)|-|S|=|N_p(S)|-|S|=\partial_p(S)\le \partial_p(G)$. Therefore, (i) follows.

Notice that (ii) is a direct consequence of (i) and Theorem \ref{Prop-RElat-Differentials}. We proceed to prove (iii).
It is known that $\gamma^p(G)\ge \gamma(G)\ge \frac{\n(G)}{\Delta(G)+1}$ and  $\gamma(G)=\frac{\n(G)}{\Delta(G)+1}$ if and only if   there exists a $\gamma(G)$-set $S$ which is a $\rho(G)$-set and every vertex in $S$ has degree $\Delta(G)$, for instance, see \cite{Haynes1998}. Hence, by Theorem \ref{th-packings}, if $\gamma^p(G)=\frac{\n(G)}{\Delta(G)+1}$, then
$$\partial_{2p}(G)=\displaystyle\max_{S\in \wp(G)}\left\{\sum_{v\in S}(deg(v)-1)\right\}=\rho(G)(\Delta(G)-1)=\frac{\n(G)(\Delta(G)-1)}{\Delta(G)+1}.$$

Conversely, if $\partial_{2p}(G)=\frac{\n(G)(\Delta(G)-1)}{\Delta(G)+1}$, then by (i) and Theorem \ref{Prop-RElat-Differentials} we have that $\partial_p(G)=\frac{\n(G)(\Delta(G)-1)}{\Delta (G)+1}$.  Therefore,  Theorem \ref{Prop-RElat-Differentials} leads to $\gamma^p(G)=\frac{\n(G)}{\Delta(G)+1}$.
\end{proof}

The difference $\partial_p(G)-\partial_{2p}(G)$ can be as large as desired. Consider the graph obtained by attaching to each vertex of a $5$-cycle a set of $k$ independent vertices. The resulting graph $G$ has order $n(G)=5k+5$ and satisfies $\partial_p(G)=5k-5$ (the set of vertices of the $5$-cycle is a $\partial_p(G)$-set) and $\partial_{2p}(G)=k+1$ (each single vertex in the $5$-cycle is a $\partial_{2p}(G)$-set). Moreover, the set of vertices of the $5$-cycle is a minimum perfect dominating set, so $\gamma^p(G)=5$ and $\partial_p(G)=n(G)-2\gamma^p(G)$ takes the smallest possible value.

Regarding the case in which both parameters agree, we have the following result.

\begin{theorem}
For any graph   $G$,  at least one of the following statements hold.
\begin{enumerate}
\item[{\rm (i)}] $\partial_{2p}(G)=\partial_p(G)$.
\item[{\rm (ii)}] $\partial_{2p}(G)\geq \frac{1}{2}(\Delta(G)\partial_p(G)+2 -(\n(G)-2)(\Delta(G)-2))$.
\end{enumerate}
\end{theorem}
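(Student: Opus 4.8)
The plan is to prove the dichotomy by assuming that (i) fails, i.e.\ $\partial_{2p}(G)<\partial_p(G)$, and deriving (ii). First I would record the elementary half: by Theorem~\ref{ThPerfectDiffPackingDiff}(i) we always have $\partial_{2p}(G)\le\partial_p(G)$, and if some $\partial_p(G)$-set $S$ happened to be a $2$-packing, then $N(S)=N_p(S)$ would give $\partial(S)=\partial_p(S)=\partial_p(G)$, whence $\partial_{2p}(G)=\partial_p(G)$ and (i) holds. Thus it suffices to treat the case in which \emph{no} $\partial_p(G)$-set is a $2$-packing; fix such a set $S$.

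Next I would analyse the defect that prevents $S$ from being a $2$-packing. Writing $N_p(S)=\bigcup_{s\in S}P_s$, where $P_s=\{u\in N_p(S):N(u)\cap S=\{s\}\}$ is the set of perfect private neighbours of $s$, one has $|N_p(S)|=\sum_{s\in S}|P_s|=\partial_p(G)+|S|$, while $\sum_{s\in S}\deg(s)$ equals $|N_p(S)|$ plus twice the number of edges inside $S$ plus the number of edges from $S$ to vertices having at least two neighbours in $S$. Since $S$ is not a $2$-packing it carries at least one such internal edge or one such shared external neighbour, so this extra contribution is at least $2$; hence $\sum_{s\in S}\deg(s)\ge \partial_p(G)+|S|+2$. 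Together with the trivial singleton estimate $\partial_{2p}(G)\ge\deg(v)-1$ for every $v$ (so $\partial_{2p}(G)\ge\Delta(G)-1$) and the size bound $|S|\le(\n(G)-\partial_p(G))/2$ coming from $N_p(S)\subseteq V(G)\setminus S$, these are the raw ingredients.

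The decisive step is to convert $S$ into a genuine $2$-packing with controlled loss. I would delete a minimum set $B\subseteq S$ meeting every defect, so that $W=S\setminus B$ is a $2$-packing, and then estimate, using Theorem~\ref{th-packings}(i),
$$\partial_{2p}(G)\ge\partial(W)=\sum_{s\in S}(\deg(s)-1)-\sum_{b\in B}(\deg(b)-1)\ge \bigl(\partial_p(G)+2\bigr)-\sum_{b\in B}(\deg(b)-1),$$
bounding each deleted degree by $\Delta(G)$ and finally feeding in $|S|\le(\n(G)-\partial_p(G))/2$ together with $\partial_{2p}(G)\ge\Delta(G)-1$ to reach the claimed coefficient $\tfrac12\bigl(\Delta(G)\partial_p(G)+2-(\n(G)-2)(\Delta(G)-2)\bigr)$. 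The hard part will be exactly this bookkeeping: a defect need not be removable by deleting a single vertex, so one must relate $|B|$ and the total deleted degree to the surplus $\sum_{s\in S}\deg(s)-(\partial_p(G)+|S|)$ produced by the defects, while simultaneously exploiting the degree cap $\Delta(G)$ and the size bound. I expect this to require a short case distinction according to the type of defect (an edge inside $S$ versus a shared external neighbour), and possibly according to whether $|S|$ is small, since the naive singleton and averaging estimates alone fall short of the stated constant precisely in the regime where $\partial_p(G)$ is close to its maximum $\tfrac{\n(G)(\Delta(G)-1)}{\Delta(G)+1}$.
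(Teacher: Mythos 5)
Your overall skeleton matches the paper's (assume $\partial_{2p}(G)<\partial_p(G)$, fix a $\partial_p(G)$-set $S$, use $|S|\le(\n(G)-\partial_p(G))/2$, pass to a $2$-packing inside $S$, and bound the loss), but the decisive estimate is exactly the step you leave open, and the ingredients you propose cannot deliver it. Your accounting gives at best
$$\partial_{2p}(G)\ \ge\ \partial_p(G)+2-|B|(\Delta(G)-1),$$
and $|B|$ can be as large as $|S|-1$ (e.g.\ if all vertices of $S$ are adjacent to a single external hub, every pair of them conflicts, so no smaller deletion set works). The inequality one actually needs, so that substituting $|S|\le(\n(G)-\partial_p(G))/2$ produces the stated constant, is
$$\partial_{2p}(G)\ \ge\ \partial_p(G)+1-(|S|-1)(\Delta(G)-2),$$
which exceeds your bound (taking $|B|=|S|-1$) by $|S|-2$. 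Since $|S|$ is unbounded, the shortfall is unbounded: your surplus of $+2$ together with the crude cap $\deg(b)\le\Delta(G)$ is too weak whenever $|S|\ge 3$, not merely in a borderline regime.

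The idea you are missing is to do the per-vertex accounting on the $N_p(S)$ side rather than on the degree side. The paper takes $S'\subseteq S$ a maximal $2$-packing and $S''=S\setminus S'$. Maximality forces every $x\in S''$ to conflict with $S'$, and that conflict wastes one of the at most $\Delta(G)$ neighbours of $x$: the wasted neighbour either lies in $S$ or is a common neighbour with a vertex of $S'$ and hence has two neighbours in $S$; in both cases it lies outside $\epn(x,S)$, so $|\epn(x,S)|\le\Delta(G)-1$ and the net contribution of $x$ to $|N_p(S)|-|S|$ is at most $\Delta(G)-2$. This is where the crucial coefficient $\Delta(G)-2$ (instead of your $\Delta(G)-1$) comes from. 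In addition, $N[S'']\cap N(S')\ne\varnothing$ removes at least one vertex of $N(S')$ from $N_p(S)$, giving an extra $-1$. Together these yield $\partial_p(G)\le\partial(S')-1+|S''|(\Delta(G)-2)\le\partial_{2p}(G)-1+(|S|-1)(\Delta(G)-2)$, and the size bound on $|S|$ finishes the computation. Your plan could in principle be repaired by proving that the defect surplus $\sum_{s\in S}\deg(s)-|N_p(S)|-|S|$ is at least $|B|+1$ for a minimum deletion set $B$ --- that would recover the coefficient $\Delta(G)-2$ --- but this is a genuine combinatorial lemma (essentially equivalent to the paper's accounting), which you neither state nor prove.
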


\begin{proof}
By Theorem \ref{ThPerfectDiffPackingDiff} we have that $\partial_{2p}(G)\leq \partial_p(G)$. From now on, we assume that $\partial_{2p}(G)<\partial_p(G)$.
Let $S$ be a $\partial_p(G)$-set. Since $\partial_p(G)=|N_p(S)|-|S|$ and $|N_p(S)|+|S|\leq \n(G)$, it follows that $|S|\leq (\n(G)-\partial_p(G))/2$. Let $S'\subseteq S$ be maximal $2$-packing and $S''=S\setminus S'$. Observe that $S$ is not a $2$-packing, as $\partial_{2p}(G)<\partial_p(G)=\partial(S)$.  Hence, $N[S'']\cap N(S')\ne \varnothing$, which implies that
$$|N_p(S)|= |N(S')\setminus N[S'']|+\sum_{x\in S''}|epn(x,S)|\le (|N(S')|-1)+|S''|(\Delta(G)-1).$$
Thus, \begin{align*}
\partial_{p}(G)&=  |N_p(S)|-|S|\\
        &\leq (|N(S')|-1)+|S''|(\Delta(G)-1)-|S|\\
        &= \partial(S')+|S'|-1+|S''|(\Delta(G)-1)-|S|\\
        &= \partial(S')-1+|S''|(\Delta(G)-2)\\
        &\leq \partial_{2p}(G)-1+(|S|-1)(\Delta(G)-2)\\
        &\leq \partial_{2p}(G)-1+\left(\frac{\n(G)-\partial_p(G)}{2}-1\right)(\Delta(G)-2).
\end{align*}
Therefore, the result follows.
\end{proof}

The bound above is tight. For instance,  for any integer $t\geq 2$,  the double star $S_{t,t}$  satisfies that $\partial_p(S_{t,t})=2t-2$, $ \partial_{2p}(S_{t,t})=t$, $\n(S_{t,t})=2t+2$ and $\Delta(S_{t,t})=t+1$.  Hence, the bound is achieved by any double star $ S_{t,t}$ with $t\ge 2$.

\section{Unique response Roman domination versus $2$-pa\-cking differential }\label{SectionGallai}

In this section we establish a Gallai-type theorem which states the relationship between the $2$-packing differential and the unique response Roman domination number.

Cockayne, Hedetniemi and Hedetniemi \cite{Cockayne2004} defined a {\it Roman dominating  function}  on a graph $G$ to be a function $f: V(G)\longrightarrow \{0,1,2\}$ satisfying the condition that every vertex $u$ for which $f(u)=0$ is adjacent to at least one vertex $v$ for which $f(v)=2$. For  $X\subseteq V(G)$ we define the weight of $X$  as  $f(X)= \sum_{v\in X}f(v)$.  The \textit{weight} of $f$ is defined to be
 $$\omega(f)=f(V(G)).$$

The unique response  version of Roman domination was introduced  by Rubalcaba and Slater  in \cite{MR2370121} and studied further in \cite{MR2794312,MR3013246,MR3790857}.
 A function $f: V(G)\longrightarrow \{0,1,2\}$ with the sets $V_0, V_1, V_2$, where $V_i = \{v \in  V(G): \,  f (v) = i\}$
for $i \in\{0, 1, 2\}$, is a \emph{unique response Roman dominating function} if $x \in V_0$ implies $|N(x) \cap V_2| = 1$
and $x \in  V_1 \cup V_2$ implies that $N(x) \cap V_2=\varnothing$.  The \emph{unique response Roman domination number} of
$G$, denoted by $\mu_{_R}(G)$, is the minimum weight among all unique response Roman dominating
functions on $G$.

\begin{theorem}[Gallai-type theorem]\label{Th-Gallai-Roman-unique-response}
For any graph $G$,
$$\mu_{_R}(G)+\partial_{2p}(G)=\n(G).$$
\end{theorem}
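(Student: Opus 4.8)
**The plan is to establish the Gallai-type identity by exhibiting a correspondence between 2-packings and unique response Roman dominating functions that trades one parameter for the other additively.**

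The core idea is that a unique response Roman dominating function (urRDF) and a 2-packing are essentially the same object viewed from opposite directions. Given a urRDF $f$ with sets $V_0, V_1, V_2$, the defining conditions are precisely that every $x \in V_0$ has $|N(x) \cap V_2| = 1$ and that $V_2$ is a 2-packing (since $x \in V_1 \cup V_2$ forces $N(x) \cap V_2 = \varnothing$, meaning no two vertices of $V_2$ share an edge, and the unique-response condition on $V_0$ rules out a common neighbour of two $V_2$-vertices). Thus $V_2 \in \wp(G)$, and moreover $V_0 \subseteq N(V_2)$ with the uniqueness guaranteeing $V_0 = N(V_2) = N_e(V_2)$. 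The weight is then $\omega(f) = |V_1| + 2|V_2| = \n(G) - |V_0| + |V_2| = \n(G) - |N_e(V_2)| + |V_2| = \n(G) - \partial(V_2)$.

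First I would prove the inequality $\mu_{_R}(G) \le \n(G) - \partial_{2p}(G)$. Take a $\partial_{2p}(G)$-set $S$ and define $f$ by setting $f(v) = 2$ for $v \in S$, $f(v) = 0$ for $v \in N_e(S)$, and $f(v) = 1$ otherwise. Since $S$ is a 2-packing, $N_e(S) = N_p(S)$ and every $v \in N_e(S)$ has exactly one neighbour in $S$, so the $V_0$-condition holds; and no vertex of $V_1 \cup V_2 = V(G) \setminus N_e(S)$ is adjacent to two vertices of $S$, giving a valid urRDF of weight $\n(G) - \partial(S) = \n(G) - \partial_{2p}(G)$. Conversely, for $\mu_{_R}(G) \ge \n(G) - \partial_{2p}(G)$, I would take a minimum-weight urRDF $f$, observe via the discussion above that $V_2$ is a 2-packing with $N_e(V_2) = V_0$, and compute $\omega(f) = \n(G) - \partial(V_2) \ge \n(G) - \partial_{2p}(G)$ since $\partial(V_2) \le \partial_{2p}(G)$.

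The main obstacle is verifying cleanly that for a minimum urRDF the set $V_0$ coincides exactly with $N_e(V_2)$, rather than being a proper subset. The inclusion $V_0 \subseteq N_e(V_2)$ is immediate from the $V_0$-condition, but I must rule out a vertex $v \in V_1$ that is adjacent to $V_2$: such a vertex would have $N(v) \cap V_2 \ne \varnothing$, contradicting the requirement that $x \in V_1 \cup V_2$ implies $N(x) \cap V_2 = \varnothing$. This is exactly why the urRDF definition forbids $V_1$-vertices from touching $V_2$, so in fact $N_e(V_2) \subseteq V_0$ holds for \emph{every} urRDF, not merely minimum ones, and the two directions fit together without needing a minimality argument on this point. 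The only genuine care required is handling the trivial or edgeless cases consistently, but the two matching inequalities above combine directly to yield $\mu_{_R}(G) + \partial_{2p}(G) = \n(G)$.
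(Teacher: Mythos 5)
Your proof is correct, but it takes a genuinely more self-contained route than the paper. The paper's proof is essentially a two-line computation resting on a cited result of Ebrahimi Targhi, Jafari Rad and Volkmann, namely the characterization $\mu_{_R}(G)=\min_{S\in \wp(G)}\{2|S|+|V(G)\setminus (N(S)\cup S)|\}$; once that is granted, the identity follows from the arithmetic $2|S|+\n(G)-|N(S)|-|S|=\n(G)-\partial(S)$ and taking the minimum over $\wp(G)$. What you do instead is reprove that characterization from first principles: you show that for \emph{every} unique response Roman dominating function the set $V_2$ is a $2$-packing with $V_0=N_e(V_2)$, hence $\omega(f)=\n(G)-\partial(V_2)$, and conversely that every $2$-packing $S$ yields a valid function of weight $\n(G)-\partial(S)$ via the assignment $2$ on $S$, $0$ on $N_e(S)$, $1$ elsewhere. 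All the verifications you sketch go through (in particular your observation that $N_e(V_2)\subseteq V_0$ holds for every such function, with no minimality needed, is exactly right, as is the check that two $V_2$-vertices can share neither an edge nor a common neighbour). Your argument buys independence from the external reference and makes the bijection-like correspondence explicit, at the cost of being longer; the paper's version buys brevity at the cost of outsourcing precisely the content you prove.
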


\begin{proof}It was shown in \cite{MR2794312} that $\mu_{_R}(G)=\displaystyle\min_{S\subseteq \wp(G)}\{2|S|+|V(G)\setminus (N(S)\cup S)|\}$. Hence, the result is deduced as follows.
\begin{align*}
\mu_{_R}(G)&=\min_{S\subseteq \wp(G)}\{2|S|+|V(G)\setminus (N(S)\cup S)|\}\\
&= \min_{S\subseteq \wp(G)}\{2|S|+\n(G)-(|N(S)|+|S|)|\}\\
&=  \min_{S\subseteq \wp(G)}\{\n(G)-\partial(S)\}\\
&= \n(G)- \max_{S\subseteq \wp(G)}\{\partial  (S)\}\\
&= \n(G)-\partial_{2p}(G).
\end{align*}
%Therefore, the result follows.
\end{proof}

Theorem \ref{Th-Gallai-Roman-unique-response} allows us to derive results on the unique response Roman domination number from  results on the $2$-packing differential  and vice versa. For instance, from Corollary \ref{2-packing differential} and Theorem \ref{Th-Gallai-Roman-unique-response} we deduce the following result.

\begin{theorem}
The problem of finding the unique response Roman domination number of a graph is NP-hard, even for regular bipartite  graphs.
\end{theorem}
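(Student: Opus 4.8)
The plan is to obtain the NP-hardness of computing $\mu_{_R}(G)$ as a direct corollary of the Gallai-type identity established in Theorem~\ref{Th-Gallai-Roman-unique-response} together with the hardness of the $2$-packing differential already recorded in Corollary~\ref{2-packing differential}. The essential observation is that the identity $\mu_{_R}(G)+\partial_{2p}(G)=\n(G)$ is an \emph{exact, polynomial-time computable} linear relation: given any graph $G$, its order $\n(G)$ is trivially computable, so the values of $\mu_{_R}(G)$ and $\partial_{2p}(G)$ determine each other in constant time once $\n(G)$ is known. This is exactly the structure of a reduction between the two optimization problems.

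First I would set up the reduction formally. Suppose, for the sake of contradiction, that there is a polynomial-time algorithm $\mathcal{A}$ computing $\mu_{_R}(G)$. Then for any input graph $G$ one can compute $\n(G)$ in polynomial time and output $\n(G)-\mathcal{A}(G)$, which by Theorem~\ref{Th-Gallai-Roman-unique-response} equals $\partial_{2p}(G)$. This yields a polynomial-time algorithm for $\partial_{2p}(G)$, contradicting Corollary~\ref{2-packing differential}. Since Corollary~\ref{2-packing differential} asserts hardness \emph{even for regular bipartite graphs}, and the reduction here is the identity map on graphs (it neither modifies $G$ nor leaves the class of regular bipartite graphs), the same restricted hardness is inherited verbatim by $\mu_{_R}$. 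This is what allows the clause ``even for regular bipartite graphs'' to be carried over.

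The one point deserving genuine care — and the closest thing to an obstacle — is making sure the reduction respects the decision-versus-optimization distinction properly, since NP-hardness is a statement about decision problems. The clean way is to argue at the level of the associated threshold decision problems: the question ``is $\partial_{2p}(G)\ge t$?'' is equivalent, via the identity, to ``is $\mu_{_R}(G)\le \n(G)-t$?'', and this equivalence is a polynomial-time many-one reduction in both directions because $\n(G)$ and the arithmetic on $t$ are trivially polynomial. Thus the decision version of unique response Roman domination is at least as hard as that of the $2$-packing differential, which is the content of Corollary~\ref{2-packing differential}.

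I would therefore write the proof as a single short paragraph invoking Corollary~\ref{2-packing differential} and Theorem~\ref{Th-Gallai-Roman-unique-response}, phrasing it as an identity-preserving reduction and explicitly noting that the transformation keeps us within the class of regular bipartite graphs so that the restricted hardness transfers. No calculation beyond the linear identity is needed; the entire weight of the argument rests on the already-proved pieces, and the task is simply to assemble them correctly while being precise about the direction of the reduction.
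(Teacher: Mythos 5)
Your proposal is correct and follows exactly the paper's own route: the paper derives this theorem in one line from Corollary~\ref{2-packing differential} and the Gallai-type identity of Theorem~\ref{Th-Gallai-Roman-unique-response}. Your write-up simply makes explicit the details the paper leaves implicit (the threshold decision problems and the fact that the identity reduction preserves the class of regular bipartite graphs), which is sound.
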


Theorem \ref{Th-Gallai-Roman-unique-response} suggests the challenge of obtaining new results on the unique response Roman domination number from the approach of $2$-packing differentials. As an example, the following table summarizes some of those results obtained here. The first column describes the result that  combined with Theorem \ref{Th-Gallai-Roman-unique-response} leads to the result on the second column.

\vspace{0,3cm}
\begin{center}
\begin{tabular}{|l|l|}
\hline
\rule[-1ex]{0pt}{4ex} \cellcolor{gray!10} From & \cellcolor{gray!10} Result for graphs with no isolated vertex\\
\hline
Theorem \ref{th-packings} (iii) & $\mu_{_R}(G)\ge \n(G)- \rho(G)(\Delta(G)-1)$.
\\&
 The equality holds if and only if there exists a $\rho(G)$-set $S$ such that \\& $\deg(v)=\Delta(G)$ for every $v\in S$.
\\
\hline
\rule[-1ex]{0pt}{4ex} Theorem \ref{Bound size and order} & $   \mu_{_R}(G)\ge \n(G)(\delta(G)+1)-\rho(G)(\delta(G)-1)-2\m(G)$.
\\&
 The equality holds if and only if there exists a $\rho(G)$-set $S$ such that
\\&
  $\deg(v)=\delta(G)$ for every $v\in V(G)\setminus S$.
\\
\hline
% TH 3.8
\rule[-1ex]{0pt}{4ex} Theorem \ref{Lower bound packing=dominating}  &
If $G$ is an efficient closed domination graph, then
$\mu_{_R}(G)\le 2\rho(G).$
\\& In particular, if there exists a $\rho(G)$-set  $S$ such that $\deg(v)=\delta(G)$ \\& for every $v\in V(G)\setminus S$,
then
$\mu_{_R}(G)= 2\rho(G).$
\\
\hline
% Th 3.5
\rule[-1ex]{0pt}{4ex} Theorem \ref{ThDominationIndep2} &
$ \mu_{_R}(G)\geq \frac{\n(G)+(\Delta(G)-1)i(G)}{\Delta(G)}.$
  \\
\hline
% Th 3.7
\rule[-1ex]{0pt}{4ex} Theorem \ref{ThDominationIndep2-particular-case} &
$ \mu_{_R}(G)\geq i(G)+1.$\\&
The equality holds if and only if $\Delta(G)=\n(G)-i(G)$.
  \\
\hline
%TH 3.13
\rule[-1ex]{0pt}{4ex} Theorem \ref{BounGammaTotal} &
$ \mu_{_R}(G)\ge \gamma_t(G).$\\&
The equality holds  if and only if $G$ is an efficient closed domination \\& graph   and  $\gamma_t(G)=2\gamma(G)$.
\\
%TH.3.14
\hline
\rule[-1ex]{0pt}{4ex} Theorem \ref{Second bound size and order}  &
$   \mu_{_R}(G)\ge  \frac{\n(G)(\delta(G)+1)-2\m(G)-\rho(G)(\delta(G)-1)+\gamma_t(G)\delta(G)}{\delta(G)+1}.$
\\&
 The equality holds if and only if there exists  a set  \\&  $S\in \mathcal{D}(G)\cap \wp(G)$ such that
  $\deg(v)=\delta(G)$ for every $v\in V(G)\setminus S$.\\
\hline
\rule[-1ex]{0pt}{4ex}
%Th 4.2 (i)
Theorem \ref{ThPerfectDiffPackingDiff} (i) &  $\mu_{_R}(G)\ge  \n(G)-\partial_p(G)$.  \\
\hline
%Th 4.2 (ii)
Theorem \ref{ThPerfectDiffPackingDiff} (ii) &  $\mu_{_R}(G)=\frac{2\n(G)}{\Delta (G)+1}$ if and only if $\gamma^p(G)=\frac{\n(G)}{\Delta(G)+1}$.  \\
\hline
\end{tabular}
\end{center}

\section{The case of lexicographic product graphs}\label{Sectionlexicographic}

In this final section we present the behaviour of the $2$-packing differential under the lexicographic product operation. We first recall the following definition. Let $G$ and $H$ be two graphs.  The \emph{lexicographic product} of $G$ and $H$ is the graph $G \circ H$ whose vertex set is  $V(G \circ H)=  V(G)  \times V(H )$ and $(u,v)(x,y) \in E(G \circ H)$ if and only if $ux \in E(G)$ or $u=x$ and $vy \in E(H)$.
Notice that  for any $u\in V(G)$  the subgraph of $G\circ H$ induced by $\{u\}\times V(H)$ is isomorphic to $H$. For simplicity, we will denote this subgraph by $H_u$.

For a basic introduction to the lexicographic product of two graphs we suggest the  books  \cite{Hammack2011,Imrich2000}.
One of the main problems in the study of $G\circ H$ consists of finding exact values or tight
bounds of specific parameters of these graphs and express
them in terms of known invariants of $G$ and $H$. In particular,   we cite the following works on domination theory of lexicographic product graphs:   (total) domination   \cite{MR3363260,Nowakowski1996,Zhang2011},   Roman domination  \cite{SUmenjak:2012:RDL:2263360.2264103},   weak Roman domination   \cite{Valveny2017},   rainbow domination   \cite{MR3057019},   super domination    \cite{Dettlaff-LemanskaRodrZuazua2017}, doubly connected domination   \cite{MR3200151}, secure domination \cite{SecureLexicographicDMGT}, double domination \cite{DD-lexicographic}  and total Roman domination \cite{Dorota2019,TRDF-Lexicographic-2020}.

The following claim, which  states the distance formula in the lexicographic product of two graphs,  is one of our main tools.

\begin{remark}\label{claimLexi}{\rm \cite{Hammack2011}}
For any     connected graph $G$ of order $\n(G)\ge 2$ and any   graph $H$, the follo\-wing statements hold.
\begin{enumerate}[{\rm (i)}]
\item  $d_{G\circ H}((g,h),(g',h')) = d_{G}(g,g')$ for $g\ne g'$.
\item  $d_{G\circ H}((g,h),(g,h')) = \min\{2,d_H(h,h')\}$.
\end{enumerate}
\end{remark}

Given a set $W\subseteq V(G)\times V(H)$, the projection of $W$ on $V(G)$ will be denoted by $P_G(W)$. The following corollary is a direct consequence of the previous remark.

\begin{corollary}\label{CorollaryPackingLexic}
Let $G$ be a connected graph of order $\n(G)\ge 2$ and let $H$ be  a graph. A set  $W\subseteq V(G)\times V(H)$ is a $2$-packing of $G\circ H$ if and only if $P_G(W)$ is a $2$-packing  of $G$ and $|W|=|P_G(W)|$.
\end{corollary}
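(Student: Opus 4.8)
The plan is to prove both directions of the equivalence using the distance formula recorded in Remark~\ref{claimLexi}. Recall that a set is a $2$-packing precisely when any two distinct vertices are at distance at least three. So I would reformulate the statement entirely in terms of pairwise distances: $W$ is a $2$-packing of $G\circ H$ if and only if $d_{G\circ H}(p,q)\ge 3$ for all distinct $p,q\in W$, and similarly for $P_G(W)$ in $G$.

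First I would prove the forward direction. Suppose $W$ is a $2$-packing of $G\circ H$. The condition $|W|=|P_G(W)|$ means no two vertices of $W$ share the same first coordinate; this follows immediately from Remark~\ref{claimLexi}(ii), since two vertices $(g,h),(g,h')$ with the same first coordinate satisfy $d_{G\circ H}((g,h),(g,h'))\le 2$, contradicting that $W$ is a $2$-packing. Having established injectivity of the projection, take two distinct elements of $P_G(W)$, say $g$ and $g'$; they come from distinct $(g,h),(g',h')\in W$ with $g\ne g'$, and Remark~\ref{claimLexi}(i) gives $d_G(g,g')=d_{G\circ H}((g,h),(g',h'))\ge 3$. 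Hence $P_G(W)$ is a $2$-packing of $G$.

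For the converse, assume $P_G(W)$ is a $2$-packing of $G$ and $|W|=|P_G(W)|$. The cardinality condition again guarantees that distinct elements of $W$ have distinct first coordinates. So for any two distinct $(g,h),(g',h')\in W$ we have $g\ne g'$, both lying in $P_G(W)$, whence $d_G(g,g')\ge 3$; Remark~\ref{claimLexi}(i) then yields $d_{G\circ H}((g,h),(g',h'))=d_G(g,g')\ge 3$, so $W$ is a $2$-packing of $G\circ H$.

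The argument is essentially a bookkeeping translation between the two metrics, so there is no serious obstacle; the only subtlety worth flagging is the role of the hypothesis $|W|=|P_G(W)|$. Without it, one could have two vertices in a single fibre $H_u$ whose distance is at most $2$, which would violate the $2$-packing condition in $G\circ H$ even when $P_G(W)$ is a perfectly good $2$-packing of $G$. Thus the cardinality equality is exactly the condition that rules out ``collisions'' within a single copy of $H$, and I would make sure to invoke Remark~\ref{claimLexi}(ii) explicitly at that point in both directions.
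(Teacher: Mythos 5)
Your proof is correct and follows exactly the route the paper intends: the paper states the corollary as a direct consequence of Remark~\ref{claimLexi}, and your argument simply fills in that reasoning, using the distance characterization of $2$-packings together with parts (i) and (ii) of the remark to handle the cross-fibre and within-fibre cases respectively. The observation that $|W|=|P_G(W)|$ is precisely the injectivity of the projection, ruling out collisions inside a single copy of $H$, is the same bookkeeping the paper leaves implicit.
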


In the following result we show general lower and upper bounds for the $2$-packing differential of a lexicographic product graph, in terms of some parameters of both factors.

We define the following parameter.
 $$\rho'(G)=\max\{|S|: \,  S \text{ is a } \partial_{2p}(G)\text{-set} \}.$$

\begin{theorem}\label{ThLexicographic}
For any connected graph $G$ of order $\n(G)\ge 2$ and any   graph $H$,
$$ \n(H)\partial_{2p}(G)+\rho'(G)(\n(H)+\Delta(H)-1)\le \partial_{2p}(G\circ H)\le \n(H)\partial_{2p}(G)+\rho(G)(\n(H)+\Delta(H)-1).$$
Furthermore,
$$ \n(H)\mu_{_R}(G)-\rho(G)(\n(H)+\Delta(H)-1)\le \mu_{_R}(G\circ H)\le \n(H)\mu_{_R}(G)-\rho'(G)(\n(H)+\Delta(H)-1).$$
\end{theorem}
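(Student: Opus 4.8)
The plan is to prove the two inequalities for $\partial_{2p}(G\circ H)$ and then obtain the bounds for $\mu_{_R}(G\circ H)$ essentially for free via the Gallai-type identity of Theorem~\ref{Th-Gallai-Roman-unique-response}, namely $\mu_{_R}(K)=\n(K)-\partial_{2p}(K)$ applied to $K=G\circ H$, together with the observation that $\n(G\circ H)=\n(G)\n(H)$. Indeed, once I have established
$$\n(H)\partial_{2p}(G)+\rho'(G)(\n(H)+\Delta(H)-1)\le \partial_{2p}(G\circ H)\le \n(H)\partial_{2p}(G)+\rho(G)(\n(H)+\Delta(H)-1),$$
I subtract every term from $\n(G)\n(H)=\n(H)\bigl(\partial_{2p}(G)+\mu_{_R}(G)\bigr)$; the middle term $\n(H)\partial_{2p}(G)$ cancels with the corresponding piece of $\n(H)\mu_{_R}(G)$, the inequalities reverse, and the claimed chain for $\mu_{_R}(G\circ H)$ drops out. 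So the entire content is in the first displayed chain, and the second statement is a one-line corollary.

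For the upper bound on $\partial_{2p}(G\circ H)$, I would take a $\partial_{2p}(G\circ H)$-set $W$ and use Corollary~\ref{CorollaryPackingLexic}: $P_G(W)$ is a $2$-packing of $G$ with $|W|=|P_G(W)|$, so $W$ meets each copy $H_u$ in at most one vertex. Writing $S=P_G(W)$, I would compute $\partial(W)=\sum_{(u,v)\in W}(\deg_{G\circ H}(u,v)-1)$ via Theorem~\ref{th-packings}(i), noting that for $(u,v)$ the degree in $G\circ H$ equals $\n(H)\deg_G(u)+\deg_H(v)$. Summing gives $\partial(W)=\n(H)\sum_{u\in S}(\deg_G(u)-1)+|S|(\n(H)-1)+\sum_{(u,v)\in W}\deg_H(v)$, and bounding $\deg_H(v)\le\Delta(H)$ and $\sum_{u\in S}(\deg_G(u)-1)\le\partial_{2p}(G)$ while $|S|\le\rho(G)$ yields the upper bound after regrouping the factor $\n(H)+\Delta(H)-1$. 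For the lower bound I would instead construct an explicit $2$-packing of $G\circ H$ that realizes the bound: take a $\partial_{2p}(G)$-set $S$ of maximum cardinality $\rho'(G)$, pick in each copy $H_u$ with $u\in S$ a vertex $v_u$ of maximum degree $\Delta(H)$, and set $W=\{(u,v_u):u\in S\}$; this is a $2$-packing by Corollary~\ref{CorollaryPackingLexic}, and the same degree computation gives $\partial(W)=\n(H)\partial_{2p}(G)+\rho'(G)(\n(H)+\Delta(H)-1)\le\partial_{2p}(G\circ H)$.

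The main obstacle I anticipate is the upper bound, specifically justifying that one may simultaneously use $\sum_{u\in S}(\deg_G(u)-1)\le\partial_{2p}(G)$ and $|S|\le\rho(G)$, since the $2$-packing $S=P_G(W)$ that maximizes $\partial(W)$ in the product need not be a $2$-packing of $G$ that is optimal for either the differential or the packing number individually. The two quantities $\sum_{u\in S}(\deg_G(u)-1)$ and $|S|$ are separately bounded by $\partial_{2p}(G)$ and $\rho(G)$ over all $2$-packings $S$ of $G$ by Theorem~\ref{th-packings}(i) and the definition of $\rho(G)$, so applying both bounds to the single set $S$ is legitimate and gives a valid (possibly non-tight) upper bound; the care is only in the bookkeeping of the coefficient $\n(H)+\Delta(H)-1$. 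For the lower bound the delicacy is ensuring $W$ is genuinely a $2$-packing of the product and that choosing $\Delta(H)$-vertices is always possible, which is immediate since each $H_u$ is an isomorphic copy of $H$; the maximality in the definition of $\rho'(G)$ is what makes the lower bound match the form stated rather than using $\rho(G)$.
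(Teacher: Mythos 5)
Your proposal is correct and follows essentially the same route as the paper: the upper bound by projecting a $\partial_{2p}(G\circ H)$-set onto $G$ via Corollary~\ref{CorollaryPackingLexic} and bounding $\sum_{u\in S}(\deg_G(u)-1)\le\partial_{2p}(G)$ and $|S|\le\rho(G)$ separately, the lower bound by the explicit $2$-packing built from a maximum-cardinality $\partial_{2p}(G)$-set paired with maximum-degree vertices of $H$, and the $\mu_{_R}$ chain as an immediate consequence of Theorem~\ref{Th-Gallai-Roman-unique-response}. The only cosmetic differences are that you organize the degree count through Theorem~\ref{th-packings}(i) rather than computing $|N(W)|$ directly, and that you allow a different maximum-degree vertex in each copy $H_u$ instead of a single $v\in V(H)$, neither of which changes the argument.
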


\begin{proof}
Let $W\subseteq V(G)\times V(H)$ be a $\partial_{2p}(G\circ H)$-set and $U=P_G(W)$.
By Corollary \ref{CorollaryPackingLexic}, we learned that $U$ is a $2$-packing of $G$. % and $|W\cap  V(H_g)\}|=1$ for every $(g,h)\in W$
Hence,
\begin{align*}
\partial_{2p}(G\circ H)&= |N(W)|-|W|\\
  &=\sum_{(g,h)\in W}|N(g)|\n(H)+\sum_{(g,h)\in W}\deg(h)-|W|\\
   &\le |N(U)|\n(H)+|U|\Delta(H)-|U|\\
   &= \partial(U)\n(H)+|U|(\n(H)+\Delta(H)-1)\\
   &\le  \partial_{2p}(G)\n(H)+\rho(G)(\n(H)+\Delta(H)-1)).
\end{align*}
Now, for any $\partial_{2p}(G)$-set $S$ with $|S|=\rho'(G)$ and any $v\in V(H)$ with $\deg(v)=\Delta(H)$, the set $S\times \{v\}$ is a $2$-packing of $G\circ H$, and so
\begin{align*}
\partial_{2p}(G\circ H)&\ge  \partial(S\times \{v\})\\
  &=\sum_{u\in S}(|N(u)|\n(H)+\Delta(H))-\rho'(G)\\
  &= |N(S)|\n(H)+\Delta(H)\rho'(G)-\rho'(G)\\
  &= \partial(S)\n(H)+\rho'(G)(\n(H)+\Delta(H)-1)\\
   &=\partial_{2p}(G)\n(H)+\rho'(G)(\n(H)+\Delta(H)-1).
\end{align*}
By Theorem \ref{Th-Gallai-Roman-unique-response} we complete the proof.
\end{proof}

If $G$ is a regular graph, then  $\rho(G)=\rho'(G)$. Hence, from Theorem  \ref{ThLexicographic} we derive the following result.

\begin{corollary}
The following statements hold for any connected regular graph $G$ of order $\n(G)\ge 2$ and any   graph $H$.

\begin{itemize}
\item $\partial_{2p}(G\circ H)= \n(H)\partial_{2p}(G)+\rho(G)(\n(H)+\Delta(H)-1).$

\item $\mu_{_R}(G\circ H)= \n(H)\mu_{_R}(G)-\rho(G)(\n(H)+\Delta(H)-1).$
\end{itemize}

\end{corollary}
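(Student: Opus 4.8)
The plan is to use regularity to force the lower and upper bounds of Theorem~\ref{ThLexicographic} to coincide, so that both inequality chains collapse to equalities. Everything rests on the observation, recorded in the text just before the statement, that $\rho'(G)=\rho(G)$ when $G$ is regular.

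First I would verify this observation. If $G$ is $k$-regular, then by Theorem~\ref{th-packings}~(i) every $S\in\wp(G)$ satisfies $\partial(S)=\sum_{v\in S}(\deg(v)-1)=(k-1)|S|$, a nondecreasing function of $|S|$ for $k\ge 1$ (and $k\ge 1$ is forced, since a connected graph with $\n(G)\ge 2$ has an edge). Consequently any maximum $2$-packing, i.e.\ any $\rho(G)$-set, attains the maximum value of $\partial$ over $\wp(G)$ and is therefore a $\partial_{2p}(G)$-set. This yields $\rho'(G)\ge\rho(G)$; since every $\partial_{2p}(G)$-set is a $2$-packing we also have $\rho'(G)\le\rho(G)$, so $\rho'(G)=\rho(G)$.

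With this equality I would substitute $\rho'(G)=\rho(G)$ directly into the two displayed bounds of Theorem~\ref{ThLexicographic}. In the chain $\n(H)\partial_{2p}(G)+\rho'(G)(\n(H)+\Delta(H)-1)\le\partial_{2p}(G\circ H)\le\n(H)\partial_{2p}(G)+\rho(G)(\n(H)+\Delta(H)-1)$ the two outer terms become identical, pinning $\partial_{2p}(G\circ H)$ to their common value and giving the first identity. Likewise, in $\n(H)\mu_{_R}(G)-\rho(G)(\n(H)+\Delta(H)-1)\le\mu_{_R}(G\circ H)\le\n(H)\mu_{_R}(G)-\rho'(G)(\n(H)+\Delta(H)-1)$ the outer terms coincide, yielding the second identity. (Alternatively, the $\mu_{_R}$ identity follows from the $\partial_{2p}$ identity by the Gallai-type Theorem~\ref{Th-Gallai-Roman-unique-response} applied to both $G\circ H$ and $G$, using $\n(G\circ H)=\n(G)\n(H)$.)

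There is no serious obstacle here: the only point requiring a moment's care is confirming $\rho'(G)=\rho(G)$, for which the nondecreasing behaviour of $(k-1)|S|$ must be checked to cover the degenerate case $k=1$ (where $G=K_2$ and every $2$-packing, including a maximum one, is trivially a $\partial_{2p}(G)$-set). Once this is in place, the corollary is an immediate squeezing of the bounds already established in Theorem~\ref{ThLexicographic}.
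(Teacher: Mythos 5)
Your proposal is correct and follows exactly the paper's route: the paper also derives the corollary by noting that $\rho'(G)=\rho(G)$ for regular graphs and then collapsing the two bounds of Theorem~\ref{ThLexicographic}. The only difference is that you supply the short verification of $\rho'(G)=\rho(G)$ via Theorem~\ref{th-packings}~(i) (including the $k=1$ case), which the paper leaves as an unproved remark.
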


If $G$ is an efficient closed domination graph with
$\partial_{2p}(G)= \n(G)-2\rho(G)$, then  $\rho(G)=\rho'(G)$. Therefore, Theorem  \ref{ThLexicographic} leads to the following result.

\begin{corollary}
If $G$ is a connected  efficient closed domination graph with $\n(G)\ge 2$ and
$\partial_{2p}(G)= \n(G)-2\rho(G)$, then the following statements hold for any   graph $H$,
\begin{itemize}
\item $  \partial_{2p}(G\circ H)= \n(H)\partial_{2p}(G)+\rho(G)(\n(H)+\Delta(H)-1).$
\item $\mu_{_R}(G\circ H)= \n(H)\mu_{_R}(G)-\rho(G)(\n(H)+\Delta(H)-1).$
\end{itemize}
 \end{corollary}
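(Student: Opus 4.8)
The plan is to reduce the entire statement to the single identity $\rho(G)=\rho'(G)$, since the sentence preceding the corollary already signals that this is precisely what forces the lower and upper bounds of Theorem~\ref{ThLexicographic} to collapse onto each other. Recall that a $\partial_{2p}(G)$-set is by definition a $2$-packing, so the inequality $\rho'(G)\le \rho(G)$ holds trivially; the real work lies in establishing the reverse inequality from the two hypotheses.

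To obtain $\rho'(G)\ge \rho(G)$, I would exhibit a single maximum $2$-packing that is simultaneously a $\partial_{2p}(G)$-set. Since $G$ is an efficient closed domination graph, there exists $D\in \mathcal{D}(G)\cap \wp(G)$, and as observed earlier in the paper this forces $|D|=\gamma(G)=\rho(G)$, so $D$ is a $\rho(G)$-set. Because $D$ dominates $G$ and is a $2$-packing, its external neighbourhood is exactly $N_e(D)=V(G)\setminus D$, whence $\partial(D)=|V(G)\setminus D|-|D|=\n(G)-2\rho(G)$. Invoking the hypothesis $\partial_{2p}(G)=\n(G)-2\rho(G)$ gives $\partial(D)=\partial_{2p}(G)$, so $D$ is a $\partial_{2p}(G)$-set of cardinality $\rho(G)$. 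Consequently $\rho'(G)\ge |D|=\rho(G)$, and together with the trivial inequality this yields $\rho(G)=\rho'(G)$.

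With $\rho(G)=\rho'(G)$ in hand, the conclusion is immediate. The two outer terms of the chain in Theorem~\ref{ThLexicographic} bounding $\partial_{2p}(G\circ H)$ now coincide, producing the displayed equality $\partial_{2p}(G\circ H)=\n(H)\partial_{2p}(G)+\rho(G)(\n(H)+\Delta(H)-1)$, and the same substitution into the $\mu_{_R}$ chain of that theorem gives the second equality. Alternatively, the $\mu_{_R}$ formula can be recovered from the $\partial_{2p}$ one via the Gallai-type identity of Theorem~\ref{Th-Gallai-Roman-unique-response} applied to $G\circ H$, using $\n(G\circ H)=\n(G)\n(H)$.

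I do not anticipate a genuine obstacle here, as both hypotheses enter in a direct and local manner; the only point requiring care is the computation $N_e(D)=V(G)\setminus D$, which must explicitly use that $D$ is both dominating (so every outside vertex is reached) and a $2$-packing (so no vertex of $D$ lands in $N_e(D)$). Everything beyond this is a mechanical specialization of Theorem~\ref{ThLexicographic}.
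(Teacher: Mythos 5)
Your proposal is correct and follows essentially the same route as the paper: the paper's own justification is precisely the remark that the two hypotheses force $\rho(G)=\rho'(G)$, after which Theorem~\ref{ThLexicographic} collapses to the stated equalities. Your argument simply fills in the detail the paper leaves implicit (that any $D\in\mathcal{D}(G)\cap\wp(G)$ is a $\rho(G)$-set with $\partial(D)=\n(G)-2\rho(G)$, hence a $\partial_{2p}(G)$-set under the hypothesis), and does so correctly.
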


We finish with the computation of the exact values of the $2$-packing differential and the unique response Roman domination number of the lexicographic product of a path and any other graph.

\begin{theorem}\label{PrH-Nonempty}
Let $r\geq 2$ be an integer. If $H$ is  a nonempty graph, then
$$\partial_{2p}(P_r\circ H)=\displaystyle\left\{ \begin{array}{ll}
                     \frac{r}{3}\left( 2\n(H)+\Delta(H)-1\right), &  r\equiv 0 \pmod 3,\\[6pt]
                    \frac{2}{3}(r-1)\n(H)+\frac{r+2}{3}(\Delta(H)-1), &  r\equiv 1 \pmod 3,\\[6pt]
                     \frac{2r-1}{3}  \n(H)+\frac{r+1}{3}(\Delta(H)-1), &   r\equiv 2 \pmod 3.
                                  \end{array}\right.
                                  $$
                                   Furthermore,
                                  $$\mu_{_R}(P_r\circ H)=\displaystyle\left\{ \begin{array}{ll}
                     \frac{r}{3}\left( \n(H)-\Delta(H)+1\right), &  r\equiv 0 \pmod 3,\\[6pt]
                    \frac{r+2}{3}(\n(H)-\Delta(H)+1), &  r\equiv 1 \pmod 3,\\[6pt]
                     \frac{r+1}{3}  (\n(H)-\Delta(H)+1), &   r\equiv 2 \pmod 3.
                                  \end{array}\right.
                                  $$
\end{theorem}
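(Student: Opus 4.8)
The plan is to reduce the computation to a purely combinatorial optimization over the $2$-packings of the path $P_r$, solve that optimization by a short ``span'' argument, and then read off $\mu_{_R}$ from the Gallai-type identity.

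First I would establish the working formula
$$\partial_{2p}(P_r\circ H)=\max_{U\in\wp(P_r)}\sum_{g\in U}\bigl(\deg_{P_r}(g)\,\n(H)+\Delta(H)-1\bigr).$$
For the inequality ``$\le$'', take any $\partial_{2p}(P_r\circ H)$-set $W$; by Corollary~\ref{CorollaryPackingLexic} its projection $U=P_G(W)$ is a $2$-packing of $P_r$ with $|W|=|U|$, so $W=\{(g,h_g):g\in U\}$. Using Theorem~\ref{th-packings}(i) and $\deg_{P_r\circ H}(g,h)=\deg_{P_r}(g)\n(H)+\deg_H(h)$, I bound $\partial(W)=\sum_{g\in U}(\deg_{P_r}(g)\n(H)+\deg_H(h_g)-1)\le\sum_{g\in U}(\deg_{P_r}(g)\n(H)+\Delta(H)-1)$. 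For ``$\ge$'', fix $v\in V(H)$ with $\deg_H(v)=\Delta(H)$ (it exists since $H$ is nonempty); then $U\times\{v\}$ is a $2$-packing of $P_r\circ H$ attaining the right-hand side for the given $U$, again by Corollary~\ref{CorollaryPackingLexic}.

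Next I would solve the optimization on $P_r$. Writing a $2$-packing $U$ as $x_1<\cdots<x_m$ and letting $e\in\{0,1,2\}$ be the number of endpoints of $P_r$ it contains (only $1$ and $r$ qualify), the sum becomes $m\bigl(2\n(H)+\Delta(H)-1\bigr)-e\,\n(H)$, since interior vertices have degree $2$ and endpoints degree $1$. The only constraint I need is the span inequality $x_m-x_1\ge 3(m-1)$, combined with $x_1\ge 2$ and $x_m\le r-1$ whenever the corresponding endpoint is excluded; this yields $m\le\lfloor r/3\rfloor$ when $e=0$, $m\le\lfloor(r+1)/3\rfloor$ when $e=1$, and $m\le\lfloor(r+2)/3\rfloor$ when $e=2$. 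Since the objective is increasing in $m$ and decreasing in $e$, I substitute these caps and maximize separately in each residue class of $r$ modulo $3$; the matching lower bounds are realized by the explicit $2$-packings $\{2,5,\dots,r-1\}$ (for $r\equiv0$), $\{1,4,\dots,r\}$ (for $r\equiv1$), and $\{1,4,\dots,r-1\}$ (for $r\equiv2$), pushed up by $\{v\}$ as above. This produces the three stated values of $\partial_{2p}(P_r\circ H)$.

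I expect the delicate point to be precisely this trade-off between $m$ and $e$. It is \emph{not} enough to insert $\rho(P_r)$ and $\partial_{2p}(P_r)$ into Theorem~\ref{ThLexicographic}: for $r\equiv1\pmod3$ the span inequality forces every maximum $2$-packing of $P_r$ to equal $\{1,4,\dots,r\}$, which contains both endpoints, so $\rho'(P_r)<\rho(P_r)$ and the two bounds of Theorem~\ref{ThLexicographic} do not coincide. The span inequality is exactly what captures that a larger $m$ may require a larger $e$, and checking the three residue classes is what pins down the optimal pair $(m,e)$. Finally, the formula for $\mu_{_R}(P_r\circ H)$ follows at once from the Gallai-type Theorem~\ref{Th-Gallai-Roman-unique-response}, since $\mu_{_R}(P_r\circ H)=\n(P_r\circ H)-\partial_{2p}(P_r\circ H)=r\,\n(H)-\partial_{2p}(P_r\circ H)$, and a one-line simplification in each residue class gives $\tfrac{r}{3}$, $\tfrac{r+2}{3}$, and $\tfrac{r+1}{3}$ times $(\n(H)-\Delta(H)+1)$, respectively.
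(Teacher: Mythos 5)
Your proof is correct, but it takes a genuinely different route from the paper's. The paper splits by residue class: for $r\equiv 0,2\pmod 3$ it simply observes that $\rho'(P_r)=\rho(P_r)$, so the two general bounds of Theorem~\ref{ThLexicographic} coincide and give the answer immediately; only for $r\equiv 1\pmod 3$ --- exactly the case you flag, where the unique $\rho(P_r)$-set $\{1,4,\dots,r\}$ contains both endpoints and $\rho'(P_r)<\rho(P_r)$ --- does it argue by hand, taking $S\times\{v\}$ with $S$ the $\rho(P_r)$-set for the lower bound (using that $P_r$ is an efficient closed domination graph, so $|N(S)|=\tfrac{2(r-1)}{3}$), and for the upper bound splitting on whether the projection $U$ of an optimal set is a $\rho(P_r)$-set or has $|U|\le\tfrac{r-1}{3}$, the latter giving $|N(U)|\le 2|U|$ and a contradiction when $\Delta(H)\ge 2$ (with equality when $\Delta(H)=1$). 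You instead prove an exact transfer formula
$$\partial_{2p}(P_r\circ H)=\max_{U\in\wp(P_r)}\sum_{g\in U}\bigl(\deg_{P_r}(g)\,\n(H)+\Delta(H)-1\bigr),$$
which strengthens, in this setting, the two inequalities of Theorem~\ref{ThLexicographic} to an equality, and then solve the resulting optimization on the path via the span inequality $x_m-x_1\ge 3(m-1)$ together with the endpoint count $e$. This buys uniformity: all three residue classes fall out of a single $(m,e)$ trade-off, your diagnosis of why Theorem~\ref{ThLexicographic} cannot be quoted when $r\equiv1\pmod 3$ matches the paper's, and your method even recovers Theorem~\ref{PrH-Empty} by setting $\Delta(H)=0$; the price is redoing, in exact form, counting that the paper recycles from Theorem~\ref{ThLexicographic}. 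One small caveat to note when writing it up: for small $r$ some $(m,e)$ pairs allowed by your caps are infeasible (e.g.\ $e=2$ requires $r\ge 4$), but since in every residue class the capped value for such pairs never exceeds the value attained by your explicit constructions, the upper-bound half of the argument is unaffected.
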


\begin{proof}
If $r\equiv 0,2 \pmod 3$, then $\rho'(P_r)=\rho(P_r)$. Hence, the result follows from Theorem \ref{ThLexicographic}. Now, assume  $r\equiv 1 \pmod 3$,
and let $S$ be a $\rho(P_r)$-set. In this case,
 $P_r$ is an efficient closed  domination graph, and so $|N(S)|=r-|S|=\frac{2(r-1)}{3}$. Thus, for every vertex $v\in V(H)$ with $\deg(v)=\Delta(H)$,
 \begin{align*}
\partial_{2p}(P_r\circ H)&\ge  \partial(S\times \{v\})\\
  &=\sum_{u\in S}(|N(u)|\n(H)+\Delta(H)-1)\\
  &= |N(S)|\n(H)+\rho(P_r)(\Delta(H)-1)\\
  &=  \frac{2}{3}(r-1)\n(H)+\frac{r+2}{3}(\Delta(H)-1).
\end{align*}
It remains to show that the equality holds.  To this end, let  $W$ be a $\partial_{2p}(P_r\circ H)$-set and $U$ the projection of $W$ on $V(P_r)$.
By Corollary \ref{CorollaryPackingLexic}, we learned that $U$ is a $2$-packing of $P_r$.
Hence, as in the proof of Theorem \ref{ThLexicographic} we deduce that $\partial_{2p}(P_r\circ H)\le  |N(U)|\n(H)+|U|(\Delta(H)-1).$
Thus, if $U$ is a $\rho(P_r)$-set, then $\partial_{2p}(P_r\circ H)=\frac{2}{3}(r-1)\n(H)+\frac{r+2}{3}(\Delta(H)-1)$, as required. Now, suppose to the contrary, that  $|U|\le \rho(P_r)-1=\frac{r-1}{3}$.
In such a case, $|N(U)|\le 2|U|\le \frac{2(r-1)}{3}$, and so
\begin{align*}
\frac{2}{3}(r-1)\n(H)+\frac{r+2}{3}(\Delta(H)-1)&\le \partial_{2p}(P_r\circ H)\\
 &\le |N(U)|\n(H)+|U|(\Delta(H)-1) \\
 &\le \frac{2}{3}(r-1)\n(H)+\frac{r-1}{3}(\Delta(H)-1),
\end{align*}
which is a contradiction for $\Delta(H)\ge 2$, and we have equalities for $\Delta(H)=1$.

By Theorem \ref{Th-Gallai-Roman-unique-response} we complete the proof.
\end{proof}

We can make the appropriate modifications in the proof of the previous theorem  to show that the following result follows.

\begin{theorem}\label{PrH-Empty}
Let $r\geq 2$ be an integer. If $H$ is  an empty graph, then
$$\partial_{2p}(P_r\circ H)=\displaystyle\left\{ \begin{array}{ll}
                     \frac{r}{3}\left( 2\n(H)-1\right), &  r\equiv 0 \pmod 3,\\[6pt]
                    \frac{r-1}{3}(2\n(H)-1), &  r\equiv 1 \pmod 3,\\[6pt]
                     \frac{2r-1}{3}  \n(H)-\frac{r+1}{3}, &   r\equiv 2 \pmod 3.
                                  \end{array}\right.
                                  $$
                                  Furthermore,
                                  $$\mu_{_R}(P_r\circ H)=\displaystyle\left\{ \begin{array}{ll}
                     \frac{r}{3}\left( \n(H)+1\right), &  r\equiv 0 \pmod 3,\\[6pt]
                    \frac{r+2}{3}\n(H)+\frac{r-1}{3}, &  r\equiv 1 \pmod 3,\\[6pt]
                     \frac{r+1}{3} ( \n(H)+1), &   r\equiv 2 \pmod 3.
                                  \end{array}\right.
                                  $$
\end{theorem}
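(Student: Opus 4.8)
The plan is to mirror the proof of Theorem \ref{PrH-Nonempty}, the only structural change being that an empty $H$ forces $\Delta(H)=0$. First I would record the exact reduction underlying that proof. If $W$ is a $2$-packing of $P_r\circ H$ with projection $U$ on $V(P_r)$, then Corollary \ref{CorollaryPackingLexic} gives $U\in\wp(P_r)$ and $|W|=|U|$; since $H$ has no edges, the neighbours of $W$ are exactly the fibres over $N(U)$, so $\partial(W)=|N(U)|\n(H)-|U|$. Lifting any $U\in\wp(P_r)$ to one vertex per fibre shows the reverse inequality, whence
\[
\partial_{2p}(P_r\circ H)=\max_{U\in\wp(P_r)}\bigl(|N(U)|\,\n(H)-|U|\bigr).
\]
Writing $a$ and $b$ for the numbers of endpoints and interior vertices of $P_r$ lying in $U$, we have $|U|=a+b$ and $|N(U)|=a+2b$, so the quantity to be maximised is $a(\n(H)-1)+b(2\n(H)-1)$.

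For $r\equiv 0\pmod 3$ and $r\equiv 2\pmod 3$ I would invoke Theorem \ref{ThLexicographic} directly. In both cases a maximum-interior $2$-packing is a largest $\partial_{2p}(P_r)$-set, so $\rho'(P_r)=\rho(P_r)$ and the two bounds of Theorem \ref{ThLexicographic} collapse; substituting $\Delta(H)=0$ together with the (easily computed) value of $\partial_{2p}(P_r)$ reproduces the two displayed expressions. The formulas for $\mu_{_R}$ then drop out of the Gallai-type identity $\mu_{_R}(P_r\circ H)=r\,\n(H)-\partial_{2p}(P_r\circ H)$ of Theorem \ref{Th-Gallai-Roman-unique-response}.

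The residue $r\equiv 1\pmod 3$ is where the empty case genuinely differs, and this is the main obstacle. Now $\rho'(P_r)=\tfrac{r-1}{3}<\tfrac{r+2}{3}=\rho(P_r)$, so Theorem \ref{ThLexicographic} supplies a matching lower bound, namely $\tfrac{r-1}{3}(2\n(H)-1)$, but an upper bound that overshoots by one. The reason is the sign of the coefficient $\Delta(H)-1=-1$: enlarging $U$ now diminishes the value, so the optimal projection is the all-interior packing of size $\tfrac{r-1}{3}$, not a maximum $2$-packing. To close the gap I would bound the displayed maximum directly, using two facts about $P_r$ with $r\equiv 1$: every $2$-packing has at most $k:=\tfrac{r-1}{3}$ interior vertices, i.e.\ $b\le k$; and whenever $b=k$ both endpoints are excluded, i.e.\ $a=0$ (admitting an endpoint would squeeze the $k$ interior vertices into a subpath too short to contain a $2$-packing of that size). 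Granting these, the maximisation is immediate: if $b=k$ then $a=0$ and the value is $k(2\n(H)-1)$, while if $b\le k-1$ then, since $a\le 2$,
\[
a(\n(H)-1)+b(2\n(H)-1)\le 2(\n(H)-1)+(k-1)(2\n(H)-1)=k(2\n(H)-1)-1.
\]
Hence the maximum equals $k(2\n(H)-1)=\tfrac{r-1}{3}(2\n(H)-1)$, matching the lower bound, and a final use of Theorem \ref{Th-Gallai-Roman-unique-response} yields the stated $\mu_{_R}(P_r\circ H)$.
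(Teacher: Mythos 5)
Your proposal is correct and follows essentially the route the paper intends: the paper gives no separate proof of Theorem~\ref{PrH-Empty}, stating only that one should make ``the appropriate modifications'' to the proof of Theorem~\ref{PrH-Nonempty}, and your argument supplies exactly those modifications --- handling $r\equiv 0,2 \pmod 3$ via Theorem~\ref{ThLexicographic} with $\rho'(P_r)=\rho(P_r)$, and replacing the perfect-code argument for $r\equiv 1\pmod 3$ (which is optimal only when $\Delta(H)\ge 1$) by the all-interior $2$-packing of size $\frac{r-1}{3}$, with a direct maximization closing the upper bound. The only slips are cosmetic: the upper bound of Theorem~\ref{ThLexicographic} overshoots by $\n(H)-1$ (it is $\rho-\rho'$ that equals one), and for $r\equiv 2\pmod 3$ the largest $\partial_{2p}(P_r)$-set necessarily contains an endpoint, so ``maximum-interior'' should be read as ``attaining the maximum number of interior vertices''; neither affects the argument.
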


\end{document}